\newlist{enumth}{enumerate}{1}
\setlist[enumth]{label=\emph{(\arabic*)}, ref=(\arabic*)}
\DeclareMathSymbol{A}{\mathalpha}{operators}{`A}%
\DeclareMathSymbol{B}{\mathalpha}{operators}{`B}%
\DeclareMathSymbol{C}{\mathalpha}{operators}{`C}%
\DeclareMathSymbol{D}{\mathalpha}{operators}{`D}%
\DeclareMathSymbol{E}{\mathalpha}{operators}{`E}%
\DeclareMathSymbol{F}{\mathalpha}{operators}{`F}%
\DeclareMathSymbol{G}{\mathalpha}{operators}{`G}%
\DeclareMathSymbol{H}{\mathalpha}{operators}{`H}%
\DeclareMathSymbol{I}{\mathalpha}{operators}{`I}%
\DeclareMathSymbol{J}{\mathalpha}{operators}{`J}%
\DeclareMathSymbol{K}{\mathalpha}{operators}{`K}%
\DeclareMathSymbol{L}{\mathalpha}{operators}{`L}%
\DeclareMathSymbol{M}{\mathalpha}{operators}{`M}%
\DeclareMathSymbol{N}{\mathalpha}{operators}{`N}%
\DeclareMathSymbol{O}{\mathalpha}{operators}{`O}%
\DeclareMathSymbol{P}{\mathalpha}{operators}{`P}%
\DeclareMathSymbol{Q}{\mathalpha}{operators}{`Q}%
\DeclareMathSymbol{R}{\mathalpha}{operators}{`R}%
\DeclareMathSymbol{S}{\mathalpha}{operators}{`S}%
\DeclareMathSymbol{T}{\mathalpha}{operators}{`T}%
\DeclareMathSymbol{U}{\mathalpha}{operators}{`U}%
\DeclareMathSymbol{V}{\mathalpha}{operators}{`V}%
\DeclareMathSymbol{W}{\mathalpha}{operators}{`W}%
\DeclareMathSymbol{X}{\mathalpha}{operators}{`X}%
\DeclareMathSymbol{Y}{\mathalpha}{operators}{`Y}%
\DeclareMathSymbol{Z}{\mathalpha}{operators}{`Z}%
\renewcommand{\leq}{\leqslant}
\renewcommand{\geq}{\geqslant}
\numberwithin{equation}{section}
\def\setminus{\mathchoice
    {\mathbin{\vrule height .62ex width 1.61ex depth -.38ex}}
    {\mathbin{\vrule height .62ex width 1.61ex depth -.38ex}}
    {\mathbin{\vrule height .50ex width 0.85ex depth -.28ex}}
    {\mathbin{\vrule height .20ex width 0.570ex depth -.24ex}}
}
\renewcommand{\mathcal}{\mathscr}
\newcommand{\Cc}{\mathbf{C}}
\newcommand{\Zz}{\mathbf{Z}}
\newcommand{\Rr}{\mathbf{R}}
\newcommand{\Ff}{\mathbf{F}}
\newcommand{\proba}{\mathbf{P}}
\newcommand{\expect}{\mathbf{E}}
\newcommand{\charfun}{\mathbf{1}}
\def\loccit{loc.\kern3pt cit.{}\xspace}
\def\cf{see\kern.3em}
\def\Cf{See\kern.3em}
\def\eg{e.g.\kern.3em}
\def\resp{\text{resp.}\kern.3em}
\renewcommand{\rho}{\varrho}
\DeclareMathOperator{\SL}{\mathbf{SL}}
\newcommand{\demi}{{\textstyle{\frac{1}{2}}}}
\DeclareMathSymbol{\gena}{\mathord}{letters}{"3C}
\DeclareMathSymbol{\genb}{\mathord}{letters}{"3E}
\theoremstyle{plain}
\newtheorem{theorem}{Theorem}[section]
\newtheorem*{theorem*}{Theorem}
\newtheorem{lemma}[theorem]{Lemma}
\newtheorem{proposition}[theorem]{Proposition}
\theoremstyle{remark}
\theoremstyle{definition}
\newtheorem{remark}[theorem]{Remark}
\renewcommand{\geq}{\geqslant}
\renewcommand{\leq}{\leqslant}
\newcommand{\ab}{\mathrm{ab}}
\begin{document}

\title{Exponential sums over small subgroups, revisited}

\author{Emmanuel Kowalski}
\address[E. Kowalski]{D-MATH, ETH Z\"urich, R\"amistrasse 101, 8092 Z\"urich, Switzerland} 
\email{kowalski@math.ethz.ch}

\subjclass[2010]{11L07, 11T23}

\keywords{Exponential sums, additive combinatorics, sum-product
  phenomenon, Balog--Szemerédi--Gowers Theorem, multiplicative energy,
  random walks on finite abelian groups}

\begin{abstract}
  This is an expository account of the proof of the theorem of Bourgain,
  Glibichuk and Konyagin which provides non-trivial bounds for
  exponential sums over very small multiplicative subgroups of prime
  finite fields.
\end{abstract}

\maketitle

\begin{flushright}
  \textit{... this peaking of the whale's flukes is perhaps
    the \\
    grandest sight to
    be seen in all animated nature},\\
  H. Melville, \textit{Moby-Dick}, Ch. lxxxvi.
\end{flushright}

\bigskip
\bigskip

\section{Introduction}

In the theory of exponential sums in number theory, the study of
``short'' sums remains one of the most mysterious. Truly robust methods,
suitable for the variety of sums that appear in applications, are
lacking in many cases. 

This note is an exposition of the proof by Bourgain, Glibichuk and
Konyagin of a remarkable estimate of this kind. It concerns exponential
sums over ``small'' subgroups of $\Ff_p^{\times}$, and is especially
noteworthy for the techniques, based on additive combinatorics, which
enter into the proof.

The precise result is the following:

\begin{theorem}[Bourgain, Glibichuk and Konyagin]\label{th-bgk}
  Let $\gamma>0$ be a real number. There exists a real number $\nu>0$,
  depending only on~$\gamma$, such that for any prime number~$p$ and
  any subgroup $H\subset \Ff_p^{\times}$ with $|H|\geq p^{\gamma}$, we
  have
  $$
  \sum_{x\in H}e\Bigl(\frac{ax}{p}\Bigr)\ll |H|p^{-\nu}
  $$
  for any $a\in\Ff_p^{\times}$, where the implied constant depends only
  on~$\gamma$. 
\end{theorem}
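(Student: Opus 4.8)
The plan is to reduce the estimate to a statement about the additive structure of $H$ by an amplification that uses only the multiplicative invariance of the sum, and then to destroy that structure with additive combinatorics: the Balog--Szemer\'edi--Gowers theorem turns a hypothetical large value of the sum into a large, nearly additively closed subset of $H$, and the sum--product phenomenon in $\Fp$ rules this out because $H$ is \emph{multiplicatively} closed. For small $\gamma$ one iterates this a bounded (in $\gamma$) number of times.

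\textbf{Step 1 (amplification).} Write $S(b)=\sum_{x\in H}e(bx/p)$. Since $x\mapsto hx$ permutes $H$ for $h\in H$, one has $S(bh)=S(b)$, so $|S(\cdot)|$ is constant on cosets of $H$ in $\Fpt$. Picking $a$ with $|S(a)|=\max_{b\neq 0}|S(b)|=:M$ and expanding the $2k$-th moment gives, for every integer $k\ge 1$,
\[
|H|\,M^{2k}\ \le\ \sum_{b\in\Fp}|S(b)|^{2k}\ =\ p\,E_k(H),
\]
where $E_k(H)$ is the number of solutions of $x_1+\cdots+x_k=y_1+\cdots+y_k$ in $H$. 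Hence it is enough to prove, for some $k=k(\gamma)$ and some $\eta=\eta(\gamma)$ with $0<\eta<\gamma$, the equidistribution-type bound $E_k(H)\le |H|^{2k}p^{\eta-1}$; reinserting it and using $|H|\ge p^{\gamma}$ yields $M\ll |H|p^{-\nu}$ with $\nu=(\gamma-\eta)/2k$. Since $E_k(H)\ge|H|^{2k}/p$ always, the target has the right shape: it says that the $k$-fold convolution $\mu_H^{*k}$ is within a factor $p^{\eta}$ of uniform in $L^2$.

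\textbf{Step 2 (the combinatorial core).} We may assume $|H|\le\sqrt p$, the range $|H|\ge\sqrt p$ being the classical ``long'' case handled by completing the sum and the Weil bound. If the target of Step 1 fails then, by Plancherel and multiplicative invariance once more, $|S(b)|$ is large for every $b$ in the whole coset $aH$; a dyadic-pigeonhole and Pl\"unnecke--Ruzsa argument then isolates a stage at which the running convolution barely spreads out, so that the ordinary additive energy $E_2(H)$ is large, and one is reduced to proving $E_2(H)\le|H|^3p^{-\delta}$ for some $\delta=\delta(\gamma)>0$. Suppose not; the Balog--Szemer\'edi--Gowers theorem (polynomial form) produces $A\subseteq H$ with $|A|\ge p^{-C\delta}|H|$ and $|A+A|\le p^{C\delta}|A|$. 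But $A\subseteq H$ forces $A\cdot A\subseteq H\cdot H=H$, whence $|A\cdot A|\le|H|\le p^{C\delta}|A|$ as well: $A$ has both a small sumset and a small product set while $|A|\le\sqrt p$, so the sum--product theorem (Bourgain--Katz--Tao, or the elementary estimate of Bourgain--Glibichuk--Konyagin) gives $|A|\le p^{C'\delta}$, contradicting $|A|\ge p^{\gamma-C\delta}$ once $\delta$ is small enough in terms of $\gamma$.

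\textbf{Step 3 (iteration) and the main difficulty.} Fed into Step 1 with $k=2$, Step 2 already proves the theorem for $\gamma$ above a threshold near $1/2$. To reach arbitrary $\gamma>0$ one iterates: the key point is that the iterated sumsets $H+\cdots+H$, and more precisely the functions $\mu_H^{*m}$, are again invariant under multiplication by $H$ (scaling permutes the summands), so their level sets are unions of cosets of $H$ and the Step 2 dichotomy still applies—either the mass is concentrated on few cosets, which via Balog--Szemer\'edi--Gowers and sum--product contradicts the sumset lower bound $|H+H|\gg|H|p^{\delta}$ already obtained, or it is spread over many cosets, which pushes the running $L^2$ bound closer to uniform; after $O_\gamma(1)$ rounds one reaches $E_k(H)\le|H|^{2k}p^{\eta-1}$ for a suitable $k=k(\gamma)$. \emph{The main obstacle} is precisely the bookkeeping: the Balog--Szemer\'edi--Gowers loss, the sum--product gain, the number of rounds and the final $k$ all depend on $\gamma$, and one must verify that the resulting chain of inequalities is genuinely strict—that a real contradiction, not a vacuous one, emerges for every fixed $\gamma>0$—which is exactly what the quantitatively effective (polynomial) forms of both theorems are for.
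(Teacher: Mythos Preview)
Your overall architecture---amplify to reduce to bounding $E_k(H)$, then Balog--Szemer\'edi--Gowers plus sum--product, with an iteration in $k$ to reach small $\gamma$---is the BGK strategy, and your Step~2 shortcut (that $A\subseteq H$ forces $|A\cdot A|\le |H|$) correctly yields $E_2(H)\le |H|^3p^{-\delta}$. The gap is in Step~3, and it is more than bookkeeping. Once you pass to a level set $A$ of $\mu_H^{*m}$ for $m\ge 2$, you rightly note that $A$ is a union of $H$-cosets; but the subset $A'\subseteq A$ produced by BSG need not be $H$-invariant, nor contain any full coset $cH$, so neither $A'\cdot A'\subseteq H$ nor $|A'+A'|\ge |H+H|$ is available, and your dichotomy (``mass on few cosets'' versus ``many cosets'') does not visibly lead to a contradiction with $|H+H|\gg |H|p^{\delta}$. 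Extracting \emph{multiplicative} near-closure of some large subset of the level set from the mere $H$-invariance of $\mu_H^{*m}$ is precisely the crux of the whole argument, and your sketch does not supply a mechanism for it.

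The paper closes this gap by a different organisation. It proves one general inequality (Proposition~\ref{pr-alt-1}, via Proposition~\ref{pr-exp-1}) for the mixed quantity $\expect(|\varphi_X(X\widehat{Y})|^2)=\rho_Y(0)^{-1}\expect(\rho_Y(XY))$, in which the product $XY$ inside $\rho_Y$ is what carries the multiplicative information. A large value of this quantity forces the level set $A_2$ of $\rho_Y$ to have large \emph{multiplicative} energy (Lemma~\ref{lm-random-energy} applied in $\Ff_p^{\times}$), so a first application of BSG gives $A_3\subseteq A_2$ with small product set; a second application (now in $(\Ff_p,+)$, via Lemma~\ref{lm-stepping}) gives $A_4\subseteq A_3$ with small sumset as well, and sum--product finishes. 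The search for the right $k$ is then a separate short pigeonhole (Proposition~\ref{pr-alt-2}) using only the invariance $\varphi_{X_k}(aS)=\varphi_{X_k}(a)$; the additive-combinatorics machinery is invoked at a single stage rather than at every round of an iteration.
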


Theorem~\ref{th-bgk} has an equivalent formulation in terms of Gauss
sums
$$
G_d(a;p)=\sum_{x\in\Ff_p}e\Bigl(\frac{ax^d}{p}\Bigr)
$$
with exponent $d\mid p-1$. Indeed, considering the subgroup
$$
H_d=\{x^d\,\mid\, x\in\Ff_p^{\times}\}
$$
of order $(p-1)/d$, we have
$$
G_d(a;p)= 1+ \sum_{x\in\Ff_p^{\times}}e\Bigl(\frac{ax^d}{p}\Bigr)
=1+d\sum_{y\in H_d}e\Bigl(\frac{ay}{p}\Bigr)=
1+\frac{p-1}{|H_d|}\sum_{y\in H}e\Bigl(\frac{ay}{p}\Bigr)
$$
since each $y\in H_d$ is of the form $y=x^d$ for $d$ different values
of~$x\in\Ff_p^{\times}$. Hence we see that the estimate of the theorem
is equivalent to the bound $G_d(a;p)\ll p^{1-\nu}$, valid provided
$d\leq (p-1)p^{-\gamma}$ for some $\gamma>0$.

Similarly, let $H$ be a subgroup of~$\Ff_p^{\times}$. We can write
$$
\sum_{y\in H}e\Bigl(\frac{ay}{p}\Bigr) =\frac{|H|}{p-1}\sum_{H\subset
  \ker(\chi)}\sum_{y\in\Ff_p}\chi(y)e\Bigl(\frac{ay}{p}\Bigr),
$$
where $\chi$ runs over the subgroup of characters trivial on~$H$ (which
has order $(p-1)/|H|$); using the fact that Gauss sums for non-trivial
characters have modulus~$\sqrt{p}$, we see that the sums in
Theorem~\ref{th-bgk} have modulus at most $\sqrt{p}$. This is
non-trivial for $|H|$ a bit larger than $\sqrt{p}$. (See
Remark~\ref{rm-comments}, (3) for a different proof of this which does
not use Gauss sums.)

\begin{remark}
  (1) Using similar methods in combination with significant other
  ingredients, a number of generalizations of this bound have been
  obtained, among which we single out the result of
  Bourgain~\cite{bourgain-mordell} where non-trivial estimates are
  obtained for the sums
  $$
  \sum_{x\in \Ff_p^{\times}}e\Bigl(\frac{f(x)}{p}\Bigr)
  $$
  for~$f\in\Zz[X]$ of possibly very large degree, provided the degrees
  of the non-zero monomials appearing in~$f$ satisfy suitable conditions
  relative to~$p$.

  We focus on Theorem~\ref{th-bgk} for definiteness and clarity.

  (2) One can wonder about even smaller subgroups, but some restriction
  is certainly needed since $H$ could be of bounded order. For instance,
  if~$p$ is odd, there is always a subgroup of order~$2$, namely
  $\{-1,1\}$, for which the behavior of the sums is quite clearly rather
  different.

  It would be interesting to see if one could say something interesting
  for subgroups $H$ of size $\asymp (\log p)^{C}$ for some constant
  $C>0$.
  

  (3) The dependency of the exponent~$\nu$ on~$\gamma$ can be made
  explicit in Theorem~\ref{th-bgk}; currently the sharpest result (whose
  proof involves new ideas) is due to
  Shkredov~\cite[Cor.\,16]{shkredov}. 
\end{remark}

\begin{remark}
  Some of the motivation, generalizations and applications of
  Theorem~\ref{th-bgk} are discussed in a talk at IAS by Bourgain in
  December 2008, which is available online~\cite{bourgain-talk}.
  
  P. Kurlberg~\cite{kurlberg} has already written a detailed account of
  the proof of Theorem~\ref{th-bgk}, from which we benefited a lot. The
  first version of the present text was written as part of lecture notes
  for an introductory course on additive combinatorics taught in the
  Fall Semester 2023 at ETH Zürich (see~\cite{add-comb} for the current
  draft), but the current presentation is also quite different from
  that.

  Some of the changes we make in comparison with the original paper of
  Bourgain, Glibichuk and Konyagin (and with Kurlberg's account) are the
  following:
  \begin{itemize}
  \item The argument, which was originally phrased in terms of
    probability measures on~$\Ff_p$ is presented in probabilistic
    language. At least for some readers (starting from the author), this
    focus brings some additional insights and intuition.
  \item In addition, we order and phrase the main steps of the proof
    rather differently (compare Proposition~\ref{pr-alt-1}
    with~\cite[Prop.\,3.1]{kurlberg}, for instance; these are the places
    in the proof where the sum-product theorem is applied). This is done
    partly to highlight a reading of the proof which has recognizable
    connections with more ``classical'' analytic number theory.
  \item We also include a full proof of one of the two basic ingredients
    from additive combinatorics that occur in the proof of
    Theorem~\ref{th-bgk}. This is a version of the
    Balog--Szemerédi--Gowers Theorem (see Theorem~\ref{th-schoen}
    below), for which Schoen has recently given a short proof
    (see~\cite{schoen}); our presentation is based on an unpublished
    note of B. Green. This proof also has a clear probabilistic flavor,
    and thus fits our presentation very well. (On the other, we only
    quote the sum-product theorem over finite fields of Bourgain, Katz
    and Tao~\cite{bkt}, which is the other key ingredient from additive
    combinatorics.)
  \item On a more technical level, we use the same basic probabilistic
    lemma to verify the assumptions in the two applications of the
    Balog--Szemerédi--Gowers Theorem in the proof (see
    Section~\ref{sec-prob-lemmas}), and we streamline or uniformize a
    few other small steps. This should hopefully make the ideas easier
    to memorize or digest.
  \end{itemize}
\end{remark}

\section*{Notation} We use $f=O(g)$ and $f\ll g$ (or $g\gg f$)
synonymously: for functions $f$ and $g$ defined on a set $X$, this means
that there exists a real number $C\geq 0$, called sometimes the
\emph{implied constant}, such that $|f(x)|\leq g(x)$ for all~$x\in X$.

We denote by $|X|$ the cardinality of a set~$X$.

We denote by $\charfun_Y$ the characteristic function of a subset~$Y$ of
a set~$X$.

We note that although we did not attempt to keep track of the constants
in the final estimate, we have done so for the ``easier'' steps.  The
values of these constants (e.g. in Proposition~\ref{pr-exp-1}) are of
course not very important in themselves.

\section*{Acknowledgements}

We thank B. Green for sending his account of Schoen's result. We also
especially thank all the students of the ``Additive Combinatorics''
class for their interest and active participation in the course, and
C. Bortolotto for organizing the exercise sessions.  Thanks to
A. Gamburd for sending the link to Bourgain's talk~\cite{bourgain-talk}
and to I. Shkredov for pointing out his improved bound
in~\cite{shkredov}.

\section{Preliminaries}

We summarize here the background results used in the proof of
Theorem~\ref{th-bgk}. This section can be skipped until needed during
the proof of the theorem.

\begin{lemma}\label{lm-proba-lower-bound}
  Let~$X$ be a bounded non-negative random variable. Let~$M\geq 0$
  be such that $X\leq M$. Assume that
  $$
  \expect(X)\geq (1-\delta)M 
  $$
  for some $\delta>0$. We then have
  $$
  \proba\Bigl(X\geq (1-\gamma) M\Bigr)\geq 1-\frac{\delta}{\gamma}
  $$
  for any $\gamma$ such that $0<\gamma\leq 1$.

  In particular, if $\expect(X)\geq \alpha^{-1}M$ for some $\alpha\geq
  1$, then
  \begin{equation}\label{eq-proba-lb}
    \proba\Bigl(X\geq \frac{M}{2\alpha}\Bigr)\geq \frac{1}{2\alpha}.
  \end{equation}
\end{lemma}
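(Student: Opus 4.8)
The plan is to derive both conclusions from Markov's inequality applied to the complementary non-negative random variable $M-X$. First I would observe that $M-X\geq 0$ since $X\leq M$, and that the hypothesis $\expect(X)\geq(1-\delta)M$ is equivalent to $\expect(M-X)\leq\delta M$. For any $\gamma$ with $0<\gamma\leq 1$, the event $\{X<(1-\gamma)M\}$ coincides with the event $\{M-X>\gamma M\}$, so Markov's inequality gives
\[
\proba\bigl(M-X>\gamma M\bigr)\leq\frac{\expect(M-X)}{\gamma M}\leq\frac{\delta M}{\gamma M}=\frac{\delta}{\gamma}.
\]
Taking complements yields $\proba(X\geq(1-\gamma)M)\geq 1-\delta/\gamma$, which is the first assertion.

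For the ``in particular'' statement, I would apply the first part with a suitable choice of $\delta$ and $\gamma$. The assumption $\expect(X)\geq\alpha^{-1}M$ with $\alpha\geq 1$ is of the form $\expect(X)\geq(1-\delta)M$ with $\delta=1-\alpha^{-1}\in[0,1)$. Choosing $\gamma=\tfrac12$ in the first part (legitimate since $0<\tfrac12\leq 1$) gives
\[
\proba\Bigl(X\geq\frac{M}{2}\Bigr)\geq 1-2\delta=1-2(1-\alpha^{-1})=\frac{2}{\alpha}-1.
\]
This, however, is only useful when $\alpha$ is close to $1$, and does not directly give \eqref{eq-proba-lb}. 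Instead I would choose $\gamma=1-\tfrac{1}{2\alpha}$, so that $(1-\gamma)M=M/(2\alpha)$; then $0<\gamma\leq 1$ and $\delta/\gamma=(1-\alpha^{-1})/(1-\tfrac{1}{2\alpha})=\tfrac{2\alpha-2}{2\alpha-1}$, giving $\proba(X\geq M/(2\alpha))\geq 1-\tfrac{2\alpha-2}{2\alpha-1}=\tfrac{1}{2\alpha-1}\geq\tfrac{1}{2\alpha}$, as required.

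The argument is entirely elementary, so there is no real obstacle; the only point requiring a moment's care is the correct choice of parameters in the second part — naively plugging in $\gamma=\tfrac12$ does not work, and one must instead pick $\gamma$ tailored to the threshold $M/(2\alpha)$. An alternative, perhaps cleaner, route to \eqref{eq-proba-lb} is to apply Markov directly: with $\expect(M-X)\leq(1-\alpha^{-1})M$ and the event $\{X<M/(2\alpha)\}=\{M-X>(1-\tfrac{1}{2\alpha})M\}$, Markov gives $\proba(X<M/(2\alpha))\leq\tfrac{1-\alpha^{-1}}{1-(2\alpha)^{-1}}=\tfrac{2\alpha-2}{2\alpha-1}=1-\tfrac{1}{2\alpha-1}$, whence $\proba(X\geq M/(2\alpha))\geq\tfrac{1}{2\alpha-1}\geq\tfrac{1}{2\alpha}$. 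I would present whichever of these is shorter, most likely the direct Markov computation, stating the first part as the main lemma and then specializing.
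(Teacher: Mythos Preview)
Your proof is correct and follows essentially the same route as the paper: apply Markov's inequality (the paper calls it Chebychev's) to the non-negative variable $M-X$ for the first assertion, and then specialize with $\delta=1-\alpha^{-1}$ and $\gamma=1-\tfrac{1}{2\alpha}$ for the second, obtaining the same lower bound $\tfrac{1}{2\alpha-1}\geq\tfrac{1}{2\alpha}$. Your detour through $\gamma=\tfrac12$ is unnecessary but harmless; the final choice of parameters and the computation match the paper's exactly.
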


\begin{proof}
  We use Chebychev's inequality to obtain the complementary upper-bound:
  \begin{align*}
    \proba\Bigl(X\leq (1-\gamma) M\Bigr)=\proba(M-X\geq \gamma M)
    \leq \frac{\expect(M-X)}{\gamma M}\leq \frac{\delta}{\gamma}.
  \end{align*}

  In the final assertion, we have $1-\delta=\alpha^{-1}$ and
  $1-\gamma=1-\demi\alpha^{-1}$, so that
  $$
  1-\frac{\delta}{\gamma}=\frac{\demi\alpha^{-1}}{1-\demi\alpha^{-1}}\geq
  \frac{1}{2\alpha},
  $$
  and the second inequality follows.
\end{proof}




We now discuss the version of the Balog--Szemerédi--Gowers Theorem that
we will use. We first fix some notation, to be used throughout.

Given a group~$G$ (not necessarily abelian, although this will be the
case in the applications below) and finite subsets $A$ and $B\subset G$,
we denote by $r_{A\cdot B}$ the \emph{representation function} for the
product set $A\cdot B=\{ab\,\mid\, (a,b)\in A\times B\}$, namely
$$
r_{A\cdot B}(x)=\sum_{\substack{(a,b)\in A\times B\\ab=x}}1.
$$

This function satisfies $0\leq r_{A\cdot B}(x)\leq |A|$ for all~$x\in
G$, and
$$
\sum_{x\in G}r_{A\cdot B}(x)=|A||B|.
$$

Moreover, its second moment is the so-called \emph{multiplicative
  energy} (or just \emph{energy}) of $(A,B)$, which we denote~$E(A,B)$:
$$
E(A,B)=\sum_{x\in G}r_{A\cdot B}(x)^2=|\{(a_1,a_2,b_1,b_2)\in A^2\times
B^2\,\mid\, a_1b_1=a_2b_2\}|.
$$

If~$A$ and $B$ are non-empty, we denote by $e(A,B)$ the normalized
energy, defined by
$$
e(A,B)=\frac{E(A,B)}{(|A||B|)^{3/2}}.
$$

Finally, we denote by $A^{-1}$ the set of inverses of elements
of~$A$. If~$G$ is abelian, then since $ab=cd$ is equivalent
to~$ac^{-1}=db^{-1}$, it follows that $E(A,A)=E(A,A^{-1})$.

\begin{theorem}\label{th-schoen}
  Let~$G$ be a group and~$A\subset G$ a non-empty finite
  subset. Let~$\alpha\geq 1$ be such that $e(A)\geq \alpha^{-1}$. There
  exists a subset~$B\subset A$ such that
  \begin{equation}\label{eq-bgs-1}
    |B|\geq \frac{|A|}{4\alpha},\quad\quad
    |B\cdot B^{-1}|\leq
    2^{14}\alpha^6|B|,
  \end{equation}
  where the implied constant is absolute.
\end{theorem}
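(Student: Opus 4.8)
The plan is to run the classical Balog--Szemerédi--Gowers argument in the graph-theoretic ``popularity'' form, following the probabilistic route of Green--Schoen. Set $K=\alpha$ (so $e(A)\geq K^{-1}$) and write $n=|A|$. The hypothesis $E(A,A)\geq n^3/K$ says that, if $a_1,a_2,a_3,a_4$ are chosen uniformly and independently from $A$, then $\proba(a_1a_2=a_3a_4)\geq K^{-1}n^{-1}$; equivalently, writing the bipartite ``multiplication'' graph, a positive proportion of the mass of $r_{A\cdot A}$ is carried on products $x$ with $r_{A\cdot A}(x)$ large. First I would use Lemma~\ref{lm-proba-lower-bound} (the Chebyshev-type lower bound) to extract a ``popular'' set: there is a set $P$ of products $x$ with $r_{A\cdot A}(x)\gg n/K$ and $\sum_{x\in P} r_{A\cdot A}(x)\gg n^2/K$. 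This turns the energy hypothesis into a statement that the bipartite graph $\Gamma\subset A\times A$ of edges $(a,b)$ with $ab\in P$ has $\gg n^2/K$ edges, i.e.\ density $\gg K^{-1}$, and every vertex on the right that is hit is hit $\gg n/K$ times.

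The core of the argument is then the standard BSG path-counting lemma: in a bipartite graph on $A\sqcup A$ with $\gg n^2/K$ edges, there is a subset $B\subset A$ with $|B|\gg n/K$ such that for all but a small fraction of pairs $(b,b')\in B\times B$, the number of length-two paths $b\to x\to b'$ (i.e.\ common neighbours) is $\gg n/K^2$, and then refine to kill the exceptional pairs so that this holds for \emph{all} $b,b'\in B$ after passing to a further popular subset. I would prove this exactly as in Schoen's note: pick a random vertex $v$ in $A$ (the ``apex''), let $B$ be its neighbourhood, show $\expect|B|\gg n/K$ and that the expected number of bad pairs in $B$ is small, then apply Lemma~\ref{lm-proba-lower-bound} again to fix a good $v$, and finally remove the few vertices of $B$ lying in too many bad pairs. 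The point of the construction is that a common neighbour $x$ of $b$ and $b'$ gives $b=x c^{-1}$, $b'=x d^{-1}$ with $(c,x),(d,x)$ edges, hence $bb'^{-1}=x c^{-1} d x^{-1}$; working through the combinatorics (this is where the \emph{non-abelian} bookkeeping must be done carefully, keeping left/right multiplications straight) one gets that each element of $B\cdot B^{-1}$ has $\gg (n/K^2)\cdot(\text{something})$ representations in terms of edges of $\Gamma$, while the total number of such representations is $O(n^2\cdot |A|)$ or similar. Dividing, $|B\cdot B^{-1}|\ll K^6 |B|$, and tracking the absolute constants through the two applications of Lemma~\ref{lm-proba-lower-bound} and the vertex-removal step yields the explicit $2^{14}\alpha^6$.

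The main obstacle I expect is twofold. First, the precise representation-counting in the last step: one must show that $r_{B\cdot B^{-1}}(g)$ is \emph{small} on average in a way that matches, rather than merely that $B\cdot B^{-1}$ is small, and the cleanest way is a Cauchy--Schwarz / double-counting over the apex-and-edge data, so I would set up the count so that each $g\in B\cdot B^{-1}$ is counted with a guaranteed multiplicity $\gtrsim n^2/K^4$ from below (via the many common neighbours) against a total of $\lesssim n^3/K^2$ configurations from above. Getting the exponent of $\alpha$ to come out as exactly $6$ rather than $7$ or $8$ requires using the popularity of $P$ (the $\gg n/K$ lower bound on $r_{A\cdot A}$ over $P$) in \emph{both} the edge count and the common-neighbour count, not just one of them. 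Second, the non-commutativity: every identity $ab=cd \Leftrightarrow ac^{-1}=db^{-1}$ used freely in the abelian case must be replaced by $ab=cd\Leftrightarrow a^{-1}c = b d^{-1}$ (or the appropriate variant), and one has to choose once and for all whether $B$ sits on the left or right factor so that $B\cdot B^{-1}$, and not some conjugate, is what gets controlled; I would fix conventions at the start of the proof and never vary them. Everything else --- the two invocations of Lemma~\ref{lm-proba-lower-bound}, the averaging over the apex, the final vertex-removal --- is routine.
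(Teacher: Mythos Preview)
Your outline would yield a correct BSG theorem, but despite invoking Schoen by name you are actually describing the older Gowers-style graph argument, and the paper genuinely follows Schoen. The distinction is in the randomization step: you pick a uniform apex $v\in A$ and take $B$ to be its neighbourhood in a popularity graph built from a pre-extracted popular set $P\subset A\cdot A$; the paper (Proposition~\ref{pr-interm}) instead draws $X\in G$ with the \emph{biased} law $\proba(X=x)=r_{A\cdot A^{-1}}(x)/|A|^2$ and sets $C=A\cap A\cdot X$, which is precisely what it flags as ``the key idea''. This buys two things. First, $\expect|C|=|A|\,e(A)$ and the bad-pair bound $\proba(\{a,b\}\subset C)\leq r_{A\cdot A^{-1}}(ab^{-1})/|A|$ drop out in one line each, with no preliminary popular-set extraction. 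Second, the endgame is cleaner than your unresolved ``$\gg(n/K^2)\cdot(\text{something})$'' count: defining $Y=\{y:r(y)\geq |A|/(20\alpha^2)\}$ and $N(c)=\{b\in C:cb^{-1}\in Y\}$, one refines $C$ to $B=\{c:|N(c)|\geq(1-\sqrt{\delta})|C|\}$ via Lemma~\ref{lm-proba-lower-bound}, observes that the injection $z\mapsto(az^{-1},bz^{-1})$ sends $N(a)\cap N(b)$ into $\{(u,v)\in Y^2:uv^{-1}=ab^{-1}\}$, whence $B\cdot B^{-1}\subset\{x:r_{Y\cdot Y^{-1}}(x)\geq|C|/3\}$, and a single Chebyshev gives $|B\cdot B^{-1}|\leq 3|Y|^2/|C|\leq 2^{14}\alpha^6|B|$. (Incidentally, your identity $bb'^{-1}=xc^{-1}dx^{-1}$ is garbled: in your own setup a common neighbour $x$ simply gives $bx,b'x\in P$ and $bb'^{-1}=(bx)(b'x)^{-1}\in P\cdot P^{-1}$ directly, which already holds non-abelianly without any conjugation appearing.)
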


We will give the proof below.

The last (and crucial) part of the proof is the sum-product theorem of
Bourgain, Katz and Tao~\cite{bkt}. 

\begin{theorem}[Bourgain--Katz--Tao]\label{th-sum-product}
  For any $\gamma>0$, there exists $\delta>0$ such that for any prime
  number~$p$ and any set $A\subset \Ff_p$ such that
  $|A|\leq p^{1-\gamma}$, we have
  \begin{equation}\label{eq-sum-product-fp}
    \max(|A+A|,|A\cdot A|)\gg |A|^{1+\delta},
  \end{equation}
  where the implied constant depends only on~$\gamma$.
\end{theorem}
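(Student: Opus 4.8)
The plan is to argue by contradiction through a structural dichotomy due to Bourgain, Katz and Tao: the real content is that $\Ff_p$ has no ``approximate subfield'' of intermediate size. Concretely, I would first isolate the following lemma, in which $K\geq 1$ is a parameter: \emph{there is an absolute constant $C$ such that if $A\subseteq\Ff_p$ satisfies $|A+A|\leq K|A|$ and $|A\cdot A|\leq K|A|$, then either $|A|\leq K^C$ or $|A|\geq p/K^C$.} Granting this, Theorem~\ref{th-sum-product} follows quickly: if $\max(|A+A|,|A\cdot A|)\leq c\,|A|^{1+\delta}$ for a small $\delta=\delta(\gamma)$ and a small absolute constant $c$, apply the lemma with $K=c\,|A|^\delta$. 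The first alternative gives $|A|^{1-C\delta}\leq c^C$, which for $\delta<1/(2C)$ bounds $|A|$ by an absolute constant $M$; such $A$ are handled directly, since Cauchy--Davenport gives $|A+A|\geq\min(p,2|A|-1)>c\,|A|^{1+\delta}$ for $|A|\leq M$ once $\delta$ is small enough in terms of $M$ and $p$ is not tiny. The second alternative gives $|A|\geq p^{1-C\delta}/c^C$, which contradicts $|A|\leq p^{1-\gamma}$ once $C\delta<\gamma$ and $p$ is large (small $p$ being absorbed into the implied constant).

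For the lemma I would first run ``Ruzsa calculus'': from $|A+A|\leq K|A|$ the Plünnecke--Ruzsa inequalities give $|mA-nA|\leq K^{O(m+n)}|A|$, and dually $|A\cdot A|\leq K|A|$ controls all bounded iterated products and quotients of $A$ by $K^{O(1)}|A|$; discarding $0$ and dilating by an element of $A^{-1}$, one may assume $1\in A$. What is missing is control of \emph{mixed} expressions such as $A\cdot A+A\cdot A$, and this is supplied by energy together with Balog--Szemerédi--Gowers. Small doubling forces high energy, $E(A,A)\geq|A|^4/|A\cdot A|\geq|A|^3/K$ and likewise additively; a popularity argument on the multiplicative energy produces a set $T\subseteq A\cdot A^{-1}$ with $|T|\gg|A|/K$ such that for each $t\in T$ the set $B_t=A\cap t^{-1}A$ has $|B_t|\gg|A|/K$ and $tB_t\subseteq A$, whence $B_t+tB_t\subseteq A+A$ and $|B_t+tB_t|\ll K^{O(1)}|B_t|$. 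Feeding the additive and multiplicative structure of $T$ into the Balog--Szemerédi--Gowers Theorem (Theorem~\ref{th-schoen}) and cleaning up, one reaches a symmetric set $F\ni 0,1$ with $|A|/K^{O(1)}\ll|F|\ll K^{O(1)}|A|$ that is \emph{approximately closed under both operations}: every ring expression of bounded complexity in the elements of $F$ takes values in a set of size $\leq K^{O(1)}|F|$. Approximate multiplicative closure uses relations like $(t_1t_2)B\subseteq t_1(t_2B)$ with $t_2B$ close to $B$; approximate additive closure uses the pointwise inclusion $(t_1+t_2)B\subseteq t_1B+t_2B$ together with the energy estimates---each manipulation costing only a fixed power of $K$.

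It remains to rule out such an $F$, and here primality of $p$ enters, in an essential and \emph{quantitative} way. Qualitatively, a finite subring of the field $\Ff_p$ containing a nonzero element is a finite integral domain, hence a field, hence all of $\Ff_p$; so the subring generated by any nonzero element of $F-F$ is $\Ff_p$. Effectively, one must show there is an absolute $k$ for which the set of values of ring expressions of complexity at most $k$ in the elements of $F$ already has size $\gg\min(|F|^2,p)$. Combined with approximate closure (these values stay in a set of size $\leq K^{O(1)}|F|$), this forces $\min(|F|^2,p)\ll K^{O(1)}|F|$, i.e. $|F|\leq K^{O(1)}$ or $|F|\gg p/K^{O(1)}$; transporting this back through the size comparison $|A|/K^{O(1)}\ll|F|\ll K^{O(1)}|A|$ yields the stated dichotomy for $|A|$, which completes the lemma and with it the theorem.

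The main obstacle is precisely that last step: the quantitative assertion that an approximate subfield of $\Ff_p$ must be tiny or nearly all of $\Ff_p$, equivalently that a bounded number of additions and multiplications already inflates a ``rich'' set to size comparable with $\min(|A|^2,p)$. This is the deepest point of the whole argument, and it is exactly where primality cannot be dropped: over $\Ff_{p^2}$ the subfield $\Ff_p$ is a genuine counterexample --- a set of size $\sqrt{p^2}$ exactly closed under both operations --- so any correct proof must break down there, and this one does so at the passage from the subring generated by $F$ to the whole ambient field. A secondary but real nuisance is keeping all the exponents of $K$ bounded by an absolute constant through the several Ruzsa and Balog--Szemerédi--Gowers applications, so that the final $\delta(\gamma)$ is uniform in $p$; the degenerate cases ($0\in A$, the dilation forcing $1\in A$, and $|A|$ below the absolute threshold $M$) are each disposed of separately and routinely.
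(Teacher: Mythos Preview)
The paper does not prove Theorem~\ref{th-sum-product} at all: it is explicitly quoted as a black box (see the remark immediately following the statement, and the introductory comment that ``we only quote the sum-product theorem over finite fields of Bourgain, Katz and Tao''), with a pointer to external lecture notes for proofs. So there is no ``paper's own proof'' to compare your proposal against.

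That said, your sketch does follow the broad architecture of the original Bourgain--Katz--Tao argument: reduce to a dichotomy lemma via a hypothetical counterexample, use Pl\"unnecke--Ruzsa and Balog--Szemer\'edi--Gowers to pass from small sum- and product-set to an approximate subring, then invoke primality to rule out intermediate-sized approximate subrings. You are right to flag the last step as the crux, but as written your proposal does not actually carry it out: you assert that ``there is an absolute $k$ for which the set of values of ring expressions of complexity at most $k$ \ldots\ already has size $\gg\min(|F|^2,p)$'' without indicating any mechanism for producing growth. This is not a formality; it is precisely the content of the theorem, and the standard proofs (via Glibichuk--Konyagin, or via Katz--Tao-style pigeonholing on ratios and differences) require a genuine idea that your sketch does not supply. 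The earlier passage from $A$ to an approximate subring $F$ is also somewhat loose (the claim that BSG applied to $T$ yields a set ``approximately closed under both operations'' needs a more careful argument than you give), but the real gap is the endgame.
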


\begin{remark}
  The original version of the theorem includes also the assumption that
  $|A|\geq p^{\gamma}$, but this was found to be unnecessary by Konyagin
  (although it would pose no problem in the application to
  Theorem~\ref{th-bgk}). Two proofs, written in similar style to this
  paper, can be found in the lecture notes~\cite[\S\,4.2]{add-comb}
  (besides the proof in~\cite{bkt}, these notes contain a proof based on
  ideas of Breuillard~\cite{breuillard} related to growth in the
  affine-linear group).
\end{remark}

We finish this section by giving the proof of Theorem~\ref{th-schoen},
following essentially a write-up by B. Green of the argument of
Schoen~\cite{schoen}.  Again, readers who want to focus on the proof of
Theorem~\ref{th-bgk} may skip to the beginning of the next section.

The key step is to find a large subset~$X$ of~$A$ such that the elements
of~$X\cdot X^{-1}$ have a large number of representations as elements of
$A\cdot A^{-1}$. The precise statement is the following:

\begin{proposition}\label{pr-interm}
  Let~$G$ be a group and~$A\subset G$ a non-empty finite
  subset. Let~$\alpha\geq 1$ be such that $e(A)\geq \alpha^{-1}$. Fix a
  real number $\delta$ such that $0<\delta<1$.  Denote by~$r$ the
  representation function for $A\cdot A^{-1}$.

  There exists~$x\in G$ such that
  \begin{equation}\label{eq-schoen-1}
    |A\cap A\cdot x|\geq\frac{|A|}{2\alpha}
  \end{equation}
  and
  \begin{equation}\label{eq-schoen-2}
    \Bigl|\Bigl\{(a,b)\in (A\cap A\cdot x)^2\,\mid\, r(ab^{-1})\geq
    \frac{\delta|A|}{2\alpha^2} \Bigr\}
    \Bigr|\geq (1-\delta)|A\cap A\cdot x|^2.
  \end{equation}
\end{proposition}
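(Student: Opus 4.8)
The plan is to exploit the hypothesis $e(A)\geq\alpha^{-1}$, i.e. $E(A,A^{-1})=E(A)\geq (|A|^2)^{3/2}/\alpha$, by interpreting the energy as a second moment and applying the probabilistic Lemma~\ref{lm-proba-lower-bound}. Concretely, consider the random variable obtained by picking $a\in A$ uniformly and setting $X = |A\cap A\cdot (a^{-1}b)|$ averaged suitably — more cleanly, note that $E(A)=\sum_{x\in G} r_{A\cdot A^{-1}}(x)^2$ and that $r_{A\cdot A^{-1}}(x)=|A\cap A\cdot x|$ when $G$ is a group (since $ab^{-1}=x\iff a=xb\iff a\in A\cap Ax$, after renaming). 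So $E(A)=\sum_x |A\cap Ax|^2$. The first moment is $\sum_x |A\cap Ax| = |A|^2$. Thus if we let $x$ range over the (finite) set $S=\{x: |A\cap Ax|>0\}$ with weights $|A\cap Ax|$, i.e. define a probability measure $\mu(x)=|A\cap Ax|/|A|^2$, then under $\mu$ the random variable $Y(x)=|A\cap Ax|$ satisfies $\mathbf{E}_\mu(Y)=E(A)/|A|^2\geq |A|/\alpha$. Since $Y\leq |A|=:M$, Lemma~\ref{lm-proba-lower-bound}, equation~\eqref{eq-proba-lb}, gives $\mu\bigl(Y\geq M/(2\alpha)\bigr)\geq 1/(2\alpha)$; in particular there exist many $x$ with $|A\cap Ax|\geq |A|/(2\alpha)$, which already secures \eqref{eq-schoen-1} for any such $x$.

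The work is then to show that \emph{one} such $x$ simultaneously satisfies \eqref{eq-schoen-2}. First I would establish a ``popularity'' statement: the number of quadruples $(a_1,a_2,x_1,x_2)$ — or rather, a counting over pairs $(a,b)$ together with the $x$'s realizing them — forces most pairs $(a,b)\in (A\cap Ax)^2$ to have $r(ab^{-1})$ large for a \emph{random} $x$ drawn from $\mu$ restricted to $\{Y\geq M/(2\alpha)\}$. The key computation: for $a,b\in A\cap Ax$ we can write $a=xa'$, $b=xb'$ with $a',b'\in A$, hence $ab^{-1}=x a' b'^{-1} x^{-1}$ — this conjugation is harmless for counting purposes but in the abelian applications $ab^{-1}=a'b'^{-1}$ directly, so $r(ab^{-1})\geq$ (number of $x'$ with $A\cap Ax'$ containing a pair representing the same quotient). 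I would count $\sum_{x} |\{(a,b)\in(A\cap Ax)^2 : r(ab^{-1}) < \delta|A|/(2\alpha^2)\}|$ against $\mu$, show by a Cauchy–Schwarz / double-counting argument that its $\mu$-expectation over the good event is $< \delta$ times the $\mu$-expectation of $|A\cap Ax|^2$, and conclude by Markov that there is an $x$ in the good event with the bad set of size $<\delta|A\cap Ax|^2$, which is \eqref{eq-schoen-2}.

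The main obstacle I anticipate is the second double-counting step: getting the constant $\delta|A|/(2\alpha^2)$ exactly right in the threshold, and correctly pairing the two ``uses of energy'' — once to produce an $x$ with $|A\cap Ax|$ large (first/second moment of $r_{A\cdot A^{-1}}$), and once to control, for that same $x$, the representation numbers $r(ab^{-1})$ of the induced pairs. The natural way to handle both at once is to run a single averaging argument over $x\sim\mu$ and intersect two events each of $\mu$-measure $>1-$ (something), using that the ``bad threshold'' set of $x$ (where either $|A\cap Ax|$ is too small, or too many pairs have small $r$) has small $\mu$-measure; then any surviving $x$ works. I would also need to be slightly careful that $r$ here is the representation function for $A\cdot A^{-1}$ and that the elementary identities $r_{A\cdot A^{-1}}(x)=|A\cap Ax|$ and $E(A)=E(A,A^{-1})$ (valid for $A$ in a group, and noted in the excerpt for abelian $G$) are used in the right places; the non-abelian bookkeeping with conjugates should not affect any cardinality.
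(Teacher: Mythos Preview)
Your setup is exactly the paper's: the measure $\mu(x)=r(x)/|A|^2$ is precisely the distribution of the random variable~$X$ used there, and your first-moment computation $\expect_\mu(|A\cap AX|)=E(A)/|A|^2\geq |A|/\alpha$ matches. The gap is in how you combine the two conditions.

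Your proposed strategy---``intersect two events each of $\mu$-measure $>1-$ (something)''---does not work as stated, because the event $\{|A\cap Ax|\geq |A|/(2\alpha)\}$ that you produce via~\eqref{eq-proba-lb} has $\mu$-measure only $\geq 1/(2\alpha)$, not close to~$1$; its complement is large when~$\alpha$ is large, so you cannot simply intersect. The paper sidesteps this by working with a \emph{single} combined expectation rather than two separate events: writing $B=A\cap AX$ and $Y=\{(a,b)\in A^2:r(ab^{-1})<\gamma|A|\}$ with $\gamma=\delta/(2\alpha^2)$, it shows
\[
\expect\bigl(|B|^2-\delta^{-1}|(B\times B)\cap Y|\bigr)\geq \frac{|A|^2}{2\alpha^2},
\]
and any~$x$ realizing this inequality satisfies both~\eqref{eq-schoen-1} and~\eqref{eq-schoen-2} at once. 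The lower bound on $\expect(|B|^2)$ comes from Cauchy--Schwarz, $\expect(|B|^2)\geq\expect(|B|)^2\geq |A|^2/\alpha^2$; this passage to the second moment is what compensates for the ``good'' event having small measure, and is the step your outline is missing.

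The other step you leave vague (``Cauchy--Schwarz / double-counting'') is the upper bound on $\expect(|(B\times B)\cap Y|)$. The mechanism is concrete: for fixed $a,b\in A$,
\[
\proba(\{a,b\}\subset B)=\proba(X\in A^{-1}a\cap A^{-1}b)\leq \frac{|A^{-1}a\cap A^{-1}b|}{|A|}=\frac{r(ab^{-1})}{|A|},
\]
using $\proba(X=x)=r(x)/|A|^2\leq 1/|A|$. For $(a,b)\in Y$ this is $<\gamma$, hence $\expect(|(B\times B)\cap Y|)\leq \gamma|A|^2$, and the threshold $\gamma=\delta/(2\alpha^2)$ drops out. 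This single identity is where your ``two uses of energy'' collapse into one; no second appeal to $E(A)$ is needed. (Incidentally, in the non-abelian case, if $a=a'x$ and $b=b'x$ then $ab^{-1}=a'b'^{-1}$ with no conjugation, so that worry is moot.)
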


\begin{proof}
  The key idea is to take~$x$ ``at random'', but not according to the
  uniform probability measure on~$G$. Rather, we pick a given element
  $x$ with probability proportional to~$r(x)$. More precisely, since
  $$
  \sum_{x\in G}r(x)=|A||A^{-1}|=|A|^2,
  $$
  we let~$X$ be a $G$-valued random variable such that 
  $$
  \proba(X=x)=\frac{r(x)}{|A|^2}
  $$
  for any~$x\in G$.  We further denote $B=A\cap A\cdot X$, which is a
  random subset of~$G$, contained in~$A$.

  Let~$\gamma>0$ be a parameter to be chosen later. We define
  $$
  Y=\{(a,b)\in A\times A\,\mid\, r(ab^{-1})<\gamma|A|\}.
  $$
  
  We will show that for~$\gamma=\delta/(2\alpha^2)$, the inequality
  \begin{equation}\label{eq-schoen-3}
    \expect\Bigl( |B|^2 -\delta^{-1}|(B\times B)\cap Y| \Bigr)\geq
    \frac{|A|^2}{2\alpha^2}
  \end{equation}
  holds. It implies the existence of some element $x\in G$ such that
  $$
  |A\cap A\cdot x|^2 -\delta^{-1}|(A\cap A\cdot x)^2\cap Y|\geq
  \frac{|A|^2}{2\alpha^2},
  $$
  and from this we deduce, on the one hand, that $|A\cap A\cdot x|^2\geq
  |A|^2/(2\alpha^2)$, which implies~(\ref{eq-schoen-1}), and on the
  other hand that
  $$
  |(A\cap A\cdot x)^2\cap Y|\leq \delta|A\cap A\cdot x|^2,
  $$
  which is equivalent to~(\ref{eq-schoen-2}).

  To prove~(\ref{eq-schoen-3}), we first find a lower-bound
  for~$\expect(|B|^2)$. By the Cauchy--Schwarz inequality, we have
  $\expect(|B|^2)\geq \expect(|B|)^2$, and the expectation of the size
  of~$B$ is
  $$
  \expect(|B|)=\sum_{a\in A}\proba(a\in A\cdot X)= \sum_{a\in
    A}\sum_{b\in A}\proba(X=b^{-1}a)= \frac{1}{|A|^2}\sum_{a\in
    A}\sum_{b\in A}r(b^{-1}a),
  $$
  by definition of~$X$. By replacing $r(b^{-1}a)$ by its definition, we
  compute
  $$
  \frac{1}{|A|^2}\sum_{a\in A}\sum_{b\in A}r(b^{-1}a)=
  \frac{1}{|A|^2}\sum_{a\in A}\sum_{b\in A}\sum_{\substack{(x,y)\in
      A^2\\xy^{-1}=b^{-1}a}}1=\frac{E(A,A)}{|A|^2}=|A|e(A).
  $$

  Using the assumption $e(A)\geq \alpha^{-1}$, we therefore get the
  lower bound
  $$
  \expect(|B|^2)\geq \frac{|A|^2}{\alpha^2}.
  $$

  We now handle separately an upper bound for the expectation of
  $(B\times B)\cap Y$. We simply write
  $$
  \expect(|(B\times B)\cap Y|)\leq |A|^2\max_{(a,b)\in Y}
  \proba(\{a,b\}\subset B),
  $$
  and estimate the probability that $\{a,b\}\subset B$ for each
  $(a,b)\in Y$ separately. Since $Y\subset A^2$, this is
  $$
  \proba(a\in B\text{ and } b\in B)=\proba(a\in A\cdot X\text{ and }
  b\in A\cdot X) =\proba(X\in A^{-1}\cdot a\cap A^{-1}\cdot b).
  $$

  From the crude bound $r(x)\leq |A|$, it follows that
  $\proba(X=x)\leq 1/|A|$ for any~$x\in G$, and we deduce that
  $$
  \proba(X\in A^{-1}\cdot a\cap A^{-1}\cdot b)\leq \frac{1}{|A|}
  |A^{-1}\cdot a\cap A^{-1}\cdot b|.
  $$
  
  We now note that
  $$
  |A^{-1}\cdot a\cap A^{-1}\cdot b|=|\{(x,y)\in A^2\,\mid\,
  xy^{-1}=ab^{-1}\}|
  $$
  (because of the bijection $f$ which sends an element
  $w\in A^{-1}\cdot a\cap A^{-1}\cdot b$ to $(aw^{-1},bw^{-1})$, with
  inverse $(x,y)\mapsto a^{-1}x=b^{-1}y$).  Thus we get
  $$
  \proba(a\in B\text{ and } b\in B) \leq
  \frac{1}{|A|}\sum_{\substack{(x,y)\in
      A^2\\xy^{-1}=ab^{-1}}}1=\frac{r(ab^{-1})}{|A|},
  $$
  and by definition of $Y$, this is $<\gamma |A|$.
  Thus we have
  $$
  \expect\Bigl( |B|^2 -\delta^{-1}|(B\times B)\cap Y| \Bigr)\geq
  \frac{|A|^2}{\alpha^2}-\frac{\gamma |A|^2}{\delta},
  $$
  and this is $\geq |A|^2/(2\alpha^2)$ if we take
  $\gamma=\delta/(2\alpha^2)$, as claimed.
\end{proof}

\begin{proof}[Proof of Theorem~\ref{th-schoen}]
  We apply Proposition~\ref{pr-interm} with~$\delta=1/10$; we denote by
  $C$ the set $A\cap A\cdot x$ which it provides, and let
  $$
  Y=\Bigl\{y\in G\,\mid\, r(y)\geq \frac{\delta|A|}{2\alpha^2}\Bigr\},
  $$
  where~$r$ is again the representation function for~$A\cdot A^{-1}$. We
  note that
  \begin{equation}\label{eq-schoen-4}
    |Y|\leq 20\alpha^2|A|
  \end{equation}
  by Chebychev's inequality.  Further, for any element $a\in A$, we
  denote by $N(a)$ the set of $b\in C$ such that $ab^{-1}\in Y$.

  We have $0\leq |N(c)|\leq |C|$ for any~$c\in C$; moreover,
  by~(\ref{eq-schoen-2}), we have
  $$
  \frac{1}{|C|} \sum_{c\in C}|N(c)|\geq (1-\delta)|C|,
  $$
  and this implies that $N(c)$ must often be quite close to its maximal
  value. Precisely, from Lemma~\ref{lm-proba-lower-bound} (with $X$ the
  random variable $c\mapsto N(c)$ on $C$ with uniform probability), we
  get
  $$
  |\{c\in C\,\mid\, N(c)\geq (1-\gamma )|C|\}|\geq
  \Bigl(1-\frac{\delta}{\gamma}\Bigr) |C|,
  $$
  whenever $0<\gamma<1$.  
  Taking $\gamma=\sqrt{\delta}$, we find that there are at least
  $(1-\sqrt{\delta})|C|$ elements of~$C$ such that
  $|N(c)|\geq (1-\sqrt{\delta})|C|$.

  Let~$B$ be the subset of~$C$ (hence of~$A$) defined by this condition
  on~$N(c)$; since Proposition~\ref{pr-interm} implies
  that~$|C|\geq |A|/(2\alpha)$, we already get
  $$
  |B|\geq (1-\sqrt{\delta})|C|\geq \frac{|C|}{2}\geq
  \frac{|A|}{4\alpha}.
  $$

  To conclude the proof, we claim that
  \begin{equation}\label{eq-schoen-5}
    B\cdot B^{-1}\subset \Bigl\{x\in G\,\mid\, s(x)\geq \frac{|C|}{3}\Bigr\},
  \end{equation}
  where~$s$ is the representation function for $Y\cdot Y^{-1}$. Assuming
  this, we observe that the right-hand set satisfies
  $$
  \Bigl|\Bigl\{
  x\in G\,\mid\, s(x)\geq \frac{|C|}{3}
  \Bigr\}\Bigr|
  \leq
  \frac{3|Y|^2}{|C|}
  $$ 
  (by Chebychev's inequality again).
  Using~$|C|\geq |A|/(2\alpha)$ together with~(\ref{eq-schoen-4}), we
  deduce
  $$
  |B\cdot B^{-1}|\leq \frac{3|Y|^2}{|C|}\leq 6\cdot 20^2\cdot
  \alpha^5|A|\leq 4\cdot 6\cdot 20^2\cdot \alpha^6|B|\leq 2^{14}|B|,
  $$
  which finishes the proof of the theorem.

  To prove~(\ref{eq-schoen-5}), pick any $a$ and~$b$ in~$B$; we need a
  lower bound for $s(ab^{-1})$, or in other words for the size of the
  set
  $$
  \{(u,v)\in Y\times Y\,\mid\, uv^{-1}=ab^{-1}\}.
  $$

  There is an injective map
  $$
  N(a)\cap N(b)\to \{(u,v)\in Y\times Y\,\mid\, uv^{-1}=ab^{-1}\}
  $$
  defined by $f(z)=(az^{-1},bz^{-1})$ (the crucial point here is that
  this map is well-defined: we have $(az^{-1},bz^{-1})\in Y\times Y$ by
  definition of~$N(a)$ and $N(b)$). Hence
  $s(ab^{-1})\geq |N(a)\cap N(b)|$. But, by definition, $|N(a)|$
  and~$|N(b)|$ are very large, and so is their intersection. In fact, we
  get
  $$
  |N(a)\cap N(b)|\geq (1-2\sqrt{\delta})|C|\geq \frac{|C|}{3},
  $$
  (recall that $\delta=1/10$), so that $s(ab^{-1})\geq |C|/3$, as
  desired.
\end{proof}

\section{Two probabilistic constructions}
\label{sec-constructs}

We already mentioned that we will present the proof of
Theorem~\ref{th-bgk} in probabilistic language. This relies on two
elementary constructions which we present here, in greater generality
than required.

We consider a finite group~$G$. Given a $G$-valued random variable~$X$
(defined on some probability space~$\Omega$ which we need not specify
precisely), we will denote by~$\rho_X$ its ``density'' function, i.e.,
$\rho_X\colon G\to\Rr$ is the function such that $\rho_X(x)=\proba(X=x)$
for all~$x\in X$.

\textbf{Stepping.} We say that a $G$-valued random variable~$Y$ is a
\emph{stepping} of~$X$ if~$Y=X_1X_2^{-1}$, where~$(X_1,X_2)$ are
independent random variables, both independent of~$X$ and distributed
like~$X$. In particular, $X$ and~$Y$ are then independent. We have
$$
\rho_Y(y)=\proba(Y=y)=\proba(X_1X_2^{-1}=y)=\sum_{x\in
  G}\proba(X=x)\proba(X=x^{-1}y),
$$
and in particular
\begin{equation}\label{eq-rho0}
  \rho_Y(0)=\sum_{x\in G}\proba(X=x)^2.
\end{equation}

Applying the Cauchy--Schwarz inequality to the formula for $\rho_Y(x)$,
we see that $\rho_Y(x)\leq \rho_Y(0)$ for all~$x\in G$.

\begin{remark}
  In additive notation, we have $Y=X_1-X_2$ with $(X,X_1,X_2)$
  independent and identically distributed.
\end{remark}

\par
\medskip
\par

\textbf{Peaking.} We now assume that~$G$ is commutative, with additive
notation, and we denote by $\widehat{G}$ its character group. For any
$G$-valued random variable~$X$, we denote by $\varphi_X$ the
``characteristic function'' of~$X$ (in the probabilistic sense, hence
essentially its Fourier transform), namely the function on $\widehat{G}$
defined by
$$
\varphi_X(\chi)=\expect(\chi(X))
$$
for $\chi\in\widehat{G}$. We have $\varphi_{-X}=\overline{\varphi_X}$,
and if $X_1$ and $X_2$ are independent, then
$\varphi_{X_1+X_2}=\varphi_{X_1}\varphi_{X_2}$.

Let now~$Y=X_1-X_2$ be a stepping of~$X$. According to the above, we
have~$\varphi_Y=|\varphi_X|^2$.  In particular, since
$\varphi_Y=|\varphi_X|^2\geq 0$, and since $\varphi_Y(0)=1$, we can
consider a random variable $\widehat{Y}$ on~$\widehat{G}$ such that
$$
\proba(\widehat{Y}=\chi)=\frac{\varphi_Y(\chi)}{M_X}=
\frac{|\varphi_X(\chi)|^2}{M_X}
$$
for $\chi\in\widehat{G}$, where
$$
M_X=\sum_{\chi\in \widehat{G}}|\varphi_X(\chi)|^2.
$$

Moreover, we may (and do) insist that~$\widehat{Y}$ is independent from
$(X,X_1,X_2)$, hence also from~$Y$.  (Similarly, whenever we consider
$\widehat{Z}$ for some other random variable~$Z$, it will be understood
that $\widehat{Z}$ is independent of any previously described random
variables.)

Intuitively, the random variable $\widehat{Y}$ emphasizes the characters
$\chi$ where $\varphi_X(\chi)$ is large, and for this reason we will say
that $\widehat{Y}$ is a \emph{peaking} of~$Y$, or of~$X$.

\begin{remark}
  If $G=\Zz/q\Zz$ for some integer~$q\geq 1$, we can identify as usual
  the character group with~$G$ by associating to $a\in\Zz/q\Zz$ the
  character $x\mapsto e(ax/q)$. Thus we also identify the characteristic
  function $\varphi_X$ with a function $\Zz/q\Zz\to \Cc$, with
  $$
  \varphi_X(a)=\expect\Bigl(e\Bigl(\frac{aX}{q}\Bigr)\Bigr).
  $$
\end{remark}

Steppings and peakings are related by a simple but crucial formula,
which reflects the Fourier duality. We identify as usual the dual group
of $\widehat{G}$ with $G$, the element $x\in G$ corresponding to the
character $\chi\mapsto \chi(x)$ of $\widehat{G}$. 

\begin{lemma}\label{lm-bgk-link}
  Let~$G$ be a finite commutative group. For any $G$-valued random
  variable~$X$, with stepping~$Y$ and peaking~$\widehat{Y}$, and for any
  $y\in G$, we have
  $$
  \rho_Y(y)=\frac{M_X}{|G|}\varphi_{\widehat{Y}}(y),
  $$
  where the characteristic function of~$\widehat{Y}$ is identified with
  a function on~$G$.
\end{lemma}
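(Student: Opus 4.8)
The plan is to compute both sides by unfolding the definitions and recognizing Fourier inversion on the finite abelian group $G$. Recall $\rho_Y(y) = \proba(Y=y)$ and $\varphi_{\widehat{Y}}(y) = \expect(\widehat{Y}(y))$, where we view $y\in G$ as a character of $\widehat{G}$ via $\chi\mapsto\chi(y)$. First I would write out the right-hand side using the law of $\widehat{Y}$:
$$
\varphi_{\widehat{Y}}(y) = \sum_{\chi\in\widehat{G}}\proba(\widehat{Y}=\chi)\,\chi(y) = \frac{1}{M_X}\sum_{\chi\in\widehat{G}}|\varphi_X(\chi)|^2\,\chi(y) = \frac{1}{M_X}\sum_{\chi\in\widehat{G}}\varphi_Y(\chi)\,\chi(y),
$$
using $\varphi_Y = |\varphi_X|^2$ established in the ``Peaking'' discussion. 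So the claim reduces to the Fourier inversion identity
$$
\rho_Y(y) = \frac{1}{|G|}\sum_{\chi\in\widehat{G}}\varphi_Y(\chi)\,\chi(y).
$$

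Next I would prove this inversion formula directly. Expand $\varphi_Y(\chi) = \expect(\chi(Y)) = \sum_{z\in G}\rho_Y(z)\,\chi(z)$, substitute, and swap the (finite) sums:
$$
\frac{1}{|G|}\sum_{\chi\in\widehat{G}}\varphi_Y(\chi)\,\chi(y) = \frac{1}{|G|}\sum_{z\in G}\rho_Y(z)\sum_{\chi\in\widehat{G}}\chi(z)\chi(y)^{-1} = \frac{1}{|G|}\sum_{z\in G}\rho_Y(z)\sum_{\chi\in\widehat{G}}\chi(z-y).
$$
By the orthogonality relations for characters of a finite abelian group, the inner sum $\sum_{\chi\in\widehat{G}}\chi(z-y)$ equals $|G|$ if $z=y$ and $0$ otherwise. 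Hence the whole expression collapses to $\rho_Y(y)$, which is exactly what is needed.

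Putting the two computations together gives $\rho_Y(y) = \frac{1}{|G|}\sum_\chi \varphi_Y(\chi)\chi(y) = \frac{M_X}{|G|}\varphi_{\widehat{Y}}(y)$, as claimed. There is no real obstacle here: the only point requiring a small amount of care is making sure the character-value convention matches—that is, that identifying $y\in G$ with the character $\chi\mapsto\chi(y)$ of $\widehat{G}$ is the convention under which $\varphi_{\widehat{Y}}(y) = \expect(\widehat{Y}(y))$ literally means $\sum_\chi \proba(\widehat Y=\chi)\chi(y)$, with no stray complex conjugate. Since $Y$ is a stepping, $\varphi_Y = |\varphi_X|^2$ is real and nonnegative, so even if one slips a conjugation the identity is unaffected; but it is cleanest to fix the duality $\widehat{\widehat{G}}\cong G$ as stated at the end of the excerpt and then the orthogonality relation $\sum_{\chi}\chi(g) = |G|\,\charfun_{g=0}$ does all the work.
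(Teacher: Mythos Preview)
Your proof is correct and takes essentially the same approach as the paper: both simply unfold the definitions and apply orthogonality of characters (Fourier inversion) on~$G$. The paper computes $\rho_Y(y)$ directly and first lands on $\frac{M_X}{|G|}\varphi_{\widehat{Y}}(-y)$, then invokes the symmetry $\rho_Y(-y)=\rho_Y(y)$ to finish; this is exactly the sign issue you anticipate in your closing paragraph, where the realness of $\varphi_Y=|\varphi_X|^2$ makes the stray conjugate harmless.
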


\begin{proof}
  We use the orthogonality of characters to represent the
  (set-theoretic!) characteristic function of an element~$y\in G$ by
  $$
  \frac{1}{|G|}\sum_{\chi\in\widehat{G}}\chi(x-y) =\begin{cases}
    1&\text{ if } x=y\\
    0&\text{ if } x\not=y,
  \end{cases}
  $$
  and get
  $$
  \rho_Y(y)=\expect\Bigl(
  \frac{1}{|G|}\sum_{\chi\in\widehat{G}}\chi(Y-y)\Bigr)
  =\frac{1}{|G|}\sum_{\chi\in\widehat{G}} \chi(-y)\varphi_Y(\chi)=
  \frac{M_X}{|G|}\varphi_{\widehat{Y}}(-y),
  $$
  by definition of $\widehat{Y}$.  This proves the lemma since
  $\rho_Y(-y)=\rho_Y(y)$.
\end{proof}

In particular, we note the formula
\begin{equation}\label{eq-rho1}
  \rho_Y(0)=\frac{M_X}{|G|}.
\end{equation}

\begin{remark}
  If $X$ is uniformly distributed on~$G$, then $Y$ is also uniformly
  distributed on~$G$, and $\widehat{Y}$ is a Dirac mass at the unit
  element~$1$ of~$G$. Conversely, if $X$ is a Dirac mass at
  some~$x\in G$, then $Y$ is a Dirac mass at~$1$, and $\widehat{Y}$ is
  uniformly distributed on~$\widehat{G}$.
\end{remark}

\section{Probabilistic lemmas}\label{sec-prob-lemmas}

In order to apply Theorem~\ref{th-schoen}, we will use two lemmas giving
probabilistic conditions that guarantee large energy. We use the
definition of a ``stepping'' of a random variable from the previous
section.

\begin{lemma}\label{lm-random-energy}
  Let~$G$ be a finite group and let~$A$ be a non-empty subset
  of~$G$. Let~$X$ be a $G$-valued random variable and~$Y$ a stepping
  of~$X$. We assume that $\beta\geq 1$ is such that
  $$
  \expect(r_{A\cdot A^{-1}}(X))\geq  \beta^{-1} |A|.
  $$
  
  We then have
  $$
  e(A)\geq \frac{1}{4\beta^{4}\rho_Y(0)|A|}.
  $$
\end{lemma}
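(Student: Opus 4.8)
The plan is to relate the hypothesis on $\expect(r_{A\cdot A^{-1}}(X))$ to the energy $E(A,A)$ via Cauchy--Schwarz, using the density $\rho_Y$ of the stepping as the ``weight'' that appears naturally when one writes $\expect(r_{A\cdot A^{-1}}(X))$ as a sum over $G$. Recall that for $Y = X_1 X_2^{-1}$ we have the identity computed already in the proof of Proposition~\ref{pr-interm}: $\sum_{a,b\in A} r_{A\cdot A^{-1}}(b^{-1}a) = E(A,A)$. The key observation is that the same double sum controls $\expect(r_{A\cdot A^{-1}}(X))$ once one notices that $r_{A\cdot A^{-1}}$ is supported on $A\cdot A^{-1}$, and that the value $r_{A\cdot A^{-1}}(x)$ for $x$ in that set is comparable to how often $x$ is ``hit'' — which is exactly what $\rho_Y$ measures.

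First I would write, using non-negativity and the support of the representation function,
$$
\beta^{-1}|A| \leq \expect(r_{A\cdot A^{-1}}(X)) = \sum_{x\in G} \rho_X(x)\, r_{A\cdot A^{-1}}(x).
$$
Since $r_{A\cdot A^{-1}}(x) \leq |A|$ and $\rho_X(x) \leq \sqrt{\rho_Y(0)}$ — this last bound because $\rho_Y(0) = \sum_x \rho_X(x)^2 \geq \rho_X(x)^2$ for each $x$ — one already sees the relevant quantities appear, but a cruder split is wasteful. Instead I would apply Cauchy--Schwarz to the sum above in the form
$$
\Bigl(\sum_{x} \rho_X(x)\, r_{A\cdot A^{-1}}(x)\Bigr)^2 \leq \Bigl(\sum_{x} \rho_X(x)^2\Bigr)\Bigl(\sum_{x} r_{A\cdot A^{-1}}(x)^2\Bigr) = \rho_Y(0)\, E(A,A),
$$
where I used $\sum_x \rho_X(x)^2 = \rho_Y(0)$ from~\eqref{eq-rho0} (with additive $0$ replaced by the identity of $G$) and $\sum_x r_{A\cdot A^{-1}}(x)^2 = E(A,A^{-1}) = E(A,A)$ by the definition of energy and the remark that $E(A,A) = E(A,A^{-1})$ — though note this last equality was only asserted for abelian $G$, so here one should instead directly observe $\sum_x r_{A\cdot A^{-1}}(x)^2 = |\{(a_1,a_2,b_1,b_2): a_1b_1^{-1}=a_2b_2^{-1}\}|$, which by the substitution $(a_1,a_2,b_1,b_2)\mapsto(a_1,b_1,a_2,b_2)$ reordering equals $E(A,A)$ regardless of commutativity.

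Combining the two displays gives $\beta^{-2}|A|^2 \leq \rho_Y(0)\, E(A,A)$, hence $E(A,A) \geq \beta^{-2}|A|^2/\rho_Y(0)$. Dividing by $(|A|\cdot|A|)^{3/2} = |A|^3$ yields $e(A) \geq \beta^{-2}/(\rho_Y(0)|A|)$, which is actually stronger than the claimed bound $e(A)\geq 1/(4\beta^4\rho_Y(0)|A|)$ (since $\beta\geq 1$ forces $\beta^{-2}\geq \tfrac14\beta^{-4}$ — indeed $\beta^{-2} \geq \beta^{-4}$). So the stated inequality follows a fortiori. I do not anticipate a genuine obstacle here; the only point requiring a little care is the non-commutativity of $G$, so that one must re-derive $\sum_x r_{A\cdot A^{-1}}(x)^2 = E(A,A)$ directly from the quadruple-counting description rather than invoking the abelian identity $E(A,A)=E(A,A^{-1})$, and likewise identify $\sum_x \rho_X(x)^2$ with $\rho_Y(0)$ using the formula for $\rho_Y$ at the group identity. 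If the author's intended constant $4\beta^4$ rather than $\beta^2$ suggests a lossier route — perhaps splitting the sum over $\{r_{A\cdot A^{-1}}(x) \geq \tfrac{1}{2\beta}|A|\}$ and its complement, then applying Lemma~\ref{lm-proba-lower-bound}-style reasoning — the Cauchy--Schwarz argument above still recovers and improves it, so either way the statement holds.
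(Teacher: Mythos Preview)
Your argument is correct and takes a genuinely more direct route than the paper's. The paper does precisely what you guessed in your final sentence: it defines the level set $L=\{x:r_{A\cdot A^{-1}}(x)\geq |A|/(2\beta)\}$, invokes~\eqref{eq-proba-lb} to obtain $\proba(X\in L)\geq 1/(2\beta)$, applies Cauchy--Schwarz to $\proba(X\in L)=\sum_{x\in L}\rho_X(x)$ to get $|L|\geq (4\beta^2\rho_Y(0))^{-1}$, and then bounds $E(A)\geq\sum_{x\in L}r(x)^2$ from below. Your single application of Cauchy--Schwarz to $\sum_x\rho_X(x)\,r_{A\cdot A^{-1}}(x)$ bypasses the level-set step entirely and yields the sharper constant $\beta^{-2}$ in place of $(4\beta^4)^{-1}$; nothing is lost, since the paper never exploits the level set~$L$ beyond its cardinality.

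One small correction to your aside on non-commutativity: the substitution $(a_1,a_2,b_1,b_2)\mapsto(a_1,b_1,a_2,b_2)$ does not turn $a_1b_1^{-1}=a_2b_2^{-1}$ into the relation $c_1d_1=c_2d_2$ defining $E(A,A)$, even in the abelian case. In an abelian group the correct bijection is $(a_1,a_2,b_1,b_2)\mapsto(a_1,a_2,b_2,b_1)$, which sends $a_1b_1^{-1}=a_2b_2^{-1}$ to $a_1b_2=a_2b_1$; in a genuinely non-abelian group the quantities $\sum_x r_{A\cdot A^{-1}}(x)^2=E(A,A^{-1})$ and $E(A,A)$ can differ. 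This is harmless here: the paper's own proof writes $E(A)=\sum_x r_{A\cdot A^{-1}}(x)^2$ without comment, so it is making the same tacit identification, and every application of the lemma in the paper is to an abelian group ($\Ff_p$ or $\Ff_p^{\times}$).
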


\begin{proof}
  Let
  $$
  L=\{x\in G\,\mid\, r_{A\cdot A^{-1}}(x)\geq \demi \beta^{-1}|A|\},
  $$
  so that we have the lower-bound
  $$
  E(A)=\sum_{x\in G}r_{A\cdot A^{-1}}(x)^2\geq \sum_{x\in L}r_{A\cdot
    A^{-1}}(x)^2\geq \beta^{-2}|A|^2|L|.
  $$

  Noting that~$r_{A\cdot A^{-1}}(x)\leq |A|$ for all~$x$, the assumption
  implies that
  $$
  \proba(L)=\proba\Bigl(r_{A\cdot A^{-1}}(X)\geq
  \frac{|A|}{2\beta}\Bigr) \geq \frac{1}{2\beta}
  $$
  (see~(\ref{eq-proba-lb})), but the Cauchy--Schwarz inequality and
  positivity imply that
  $$
  \proba(L)=\sum_{x\in L}\proba(X=x)\leq |L|^{1/2}\Bigl(\sum_{x\in
    G}\proba(X=x)^2\Bigr)^{1/2}=|L|^{1/2}\rho_Y(0)^{1/2},
  $$
  and hence $|L|\geq (2\beta)^{-2}\rho_Y(0)^{-1}$. The previous lower-bound
  gives
  $$
  E(A)\geq 2^{-2}\beta^{-4}\rho_Y(0)^{-1}|A|^2,
  $$
  which implies the desired result.
\end{proof}

The second and final lemma uses this to conclude that the energy of the set
of ``elements with large probability'' will be big if those sets are of
``typical'' size.

\begin{lemma}\label{lm-stepping}
  Let~$G$ be a finite group. Let~$X$ be a $G$-valued random variable and
  let~$Y=X_1X_2^{-1}$ be a stepping of~$X$.  Let~$\alpha\geq 1$ and
  define
  $$
  A=\Bigl\{x\in G\,\mid\, \proba(Y=x)\geq
  \frac{\rho_Y(0)}{\alpha}\Bigr\}.
  $$

  Let~$B\subset A$ and let~$\beta>0$ be such that
  $$
  |B|\geq \frac{1}{\beta\rho_Y(0)}.
  $$

  We have then
  $$
  e(B)\geq \frac{1}{4\alpha^9\beta^4}.
  $$
\end{lemma}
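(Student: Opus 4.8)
The plan is to apply Lemma~\ref{lm-random-energy} with the set there equal to $B$ and the random variable there equal to the stepping $Z:=Y_1Y_2^{-1}$ of $Y$ (with $Y_1,Y_2$ independent copies of $Y$). The quantity playing the role of ``$\rho_Y(0)$'' in Lemma~\ref{lm-random-energy} is then $\sum_x\proba(Z=x)^2$, which by two applications of the stepping inequality of Section~\ref{sec-constructs} is at most $\rho_Z(0)=\proba(Z=0)=\sum_x\proba(Y=x)^2$, hence at most $\rho_Y(0)$. With $A$ there equal to $B$ the lemma requires a bound $\expect(r_{B\cdot B^{-1}}(Z))\geq\beta'^{-1}|B|$, after which it outputs $e(B)\geq(4\beta'^4\rho_Z(0)|B|)^{-1}$, so everything reduces to controlling $\beta'$, $\rho_Z(0)$ and $|B|$ by powers of $\alpha$ and $\beta$.

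The mechanism that lets the factors of $\rho_Y(0)$ cancel is the elementary pointwise inequality
$$
r_{B\cdot B^{-1}}(g)\ \leq\ \frac{\alpha^2}{\rho_Y(0)^2}\,\proba(Z=g)\qquad(g\in G).
$$
To see it, write $\proba(Z=g)=\sum_h\proba(Y=h)\proba(Y=gh)$; if $(b_1,b_2)\in B^2$ has $b_1b_2^{-1}=g$ then both $b_2$ and $gb_2=b_1$ lie in $B\subset A$, so the summand $h=b_2$ is at least $(\rho_Y(0)/\alpha)^2$, and distinct pairs give distinct values $h=b_2$. Summing over $g$ and using $\sum_g r_{B\cdot B^{-1}}(g)=|B|^2$, $\sum_g\proba(Z=g)=1$ gives $|B|^2\leq\alpha^2/\rho_Y(0)^2$, that is $|B|\rho_Y(0)\leq\alpha$; since the hypothesis gives $|B|\rho_Y(0)\geq\beta^{-1}$, the quantity $|B|\rho_Y(0)$ is trapped in $[\beta^{-1},\alpha]$, and from the cases $r_{B\cdot B^{-1}}(g)\geq1$ one also reads off $|B\cdot B^{-1}|\leq\alpha^2/\rho_Y(0)^2$. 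For the energy hypothesis I would first try the cheap bound from $g=0$ (equivalently from the diagonal pairs), $\expect(r_{B\cdot B^{-1}}(Z))\geq r_{B\cdot B^{-1}}(0)\proba(Z=0)=|B|\rho_Z(0)$, take $\beta'=1/\rho_Z(0)$, and combine with $\rho_Z(0)\geq\sum_{x\in A}\proba(Y=x)^2\geq|A|(\rho_Y(0)/\alpha)^2\geq\rho_Y(0)/(\alpha^2\beta)$ and $|B|\leq\alpha/\rho_Y(0)$; this already gives $e(B)\geq\rho_Z(0)^3/(4|B|)\gg\rho_Y(0)^4/\mathrm{poly}(\alpha,\beta)$. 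A parallel purely combinatorial route gives $e(B)=E(B,B^{-1})/|B|^3\geq|B|/|B\cdot B^{-1}|\geq|B|\rho_Y(0)^2/\alpha^2\geq\rho_Y(0)/(\alpha^2\beta)$ from Cauchy--Schwarz and the bound on $|B\cdot B^{-1}|$.

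The hard part is that both of these naive estimates still carry an unwanted factor $\rho_Y(0)\leq1$, so to reach the clean $e(B)\geq1/(4\alpha^9\beta^4)$ one must use the pointwise inequality more efficiently: rather than discarding all of $r_{B\cdot B^{-1}}$ except its value at $0$, one should either feed into Lemma~\ref{lm-random-energy} an auxiliary variable whose stepping has density $\asymp|B|^{-1}$ at the origin (for instance $Y$ conditioned on $\{Y\in B\}$ and then stepped, for which the stepping inequality gives density $\leq\alpha/|B|$, so that $\rho_{Y^*}(0)|B|\leq\alpha$), or run a dyadic/water-filling estimate showing that $r_{B\cdot B^{-1}}$ is forced to concentrate on the levels where $\proba(Z=\cdot)$ is largest. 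In every version the bookkeeping has to be arranged so that each surviving factor $\rho_Y(0)$ in the numerator is spent against one use of $|B|\rho_Y(0)\geq\beta^{-1}$ and each one in the denominator against $|B|\rho_Y(0)\leq\alpha$; the exponents $9$ and $4$ then come out of the fourth powers in Lemma~\ref{lm-random-energy} together with the $\alpha^2$'s in the pointwise inequality and in the lower bound for $\rho_Z(0)$.
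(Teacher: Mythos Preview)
Your proposal is not a proof: you correctly diagnose that your two ``cheap'' routes (the diagonal term $g=0$, and Cauchy--Schwarz on $|B\cdot B^{-1}|$) leave a spurious power of $\rho_Y(0)$ in the final bound, but you do not actually repair this. The sentences at the end about conditioning $Y$ on $\{Y\in B\}$ or running a dyadic argument are programmatic, not arguments; nothing you have written forces the factors of $\rho_Y(0)$ to cancel, and the claimed exponents $9$ and $4$ are asserted rather than derived. In particular, your choice to feed $Z$ (a stepping of $Y$) into Lemma~\ref{lm-random-energy} is what creates the difficulty: your pointwise inequality $r_{B\cdot B^{-1}}(g)\leq(\alpha/\rho_Y(0))^2\proba(Z=g)$ goes the wrong way for a lower bound on $\expect(r(Z))$, and inverting it makes the argument circular.

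The paper's device is short and worth knowing. One applies Lemma~\ref{lm-random-energy} with the variable $Y$ itself (not $Z$). The point is the identity
\[
\expect\bigl(r_{B\cdot B^{-1}}(Y)\bigr)=\sum_{a,b\in B}\proba(X_1X_2^{-1}=ab^{-1})
=\sum_{y\in G}\proba(X_1\in yB)^2,
\]
followed by the ``reverse'' Cauchy--Schwarz inequality with weights $f(y)=\proba(X=y)$, which gives
\[
\sum_{y}\proba(X_1\in yB)^2\ \geq\ \frac{\bigl(\sum_y\proba(X=y)\proba(X_1\in yB)\bigr)^2}{\sum_y\proba(X=y)^2}
=\frac{\proba(Y\in B)^2}{\rho_Y(0)}.
\]
Since $B\subset A$ one has $\proba(Y\in B)\geq |B|\rho_Y(0)/\alpha$, so $\expect(r(Y))\geq \rho_Y(0)|B|^2/\alpha^2\geq |B|/(\alpha^2\beta)$ by the hypothesis on $|B|$. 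Now Lemma~\ref{lm-random-energy} with $\beta'=\alpha^2\beta$ yields $e(B)\geq 1/\bigl(4\alpha^8\beta^4\rho_Z(0)|B|\bigr)$, and $\rho_Z(0)|B|\leq\rho_Y(0)|A|\leq\alpha$ (Chebychev) gives exactly $1/(4\alpha^9\beta^4)$. The step you were missing is this rewriting of $\expect(r(Y))$ as a sum of squares over translates, which is what makes the single $\rho_Y(0)$ in the denominator cancel cleanly.
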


\begin{proof}
  Let~$r=r_{B\cdot B^{-1}}$ be the representation function
  for~$B\cdot B^{-1}$.  We have
  $$
  \expect(r(Y))=\sum_{a,b\in B}\proba(Y=ab^{-1})=
  \sum_{a,b\in A}\proba(X_1a^{-1}=X_2b^{-1}),
  $$
  and this implies that
  \begin{align*}
    \expect(r(Y))&=\sum_{y\in G}\sum_{a,b\in B}\proba(X_1a^{-1}=y\text{
      and } X_2b^{-1}=y)
    \\
    &=\sum_{y\in G}\sum_{a,b\in B}\proba(X_1a^{-1}=y)\proba(X_2b^{-1}=y)
    =\sum_{y\in G}\proba(X_1\in yB)^2.
  \end{align*}

  The ``reversed'' Cauchy--Schwarz inequality now shows that for any
  choice of $f(y)\geq 0$ for $y\in G$, not all zero, we have
  $$
  \expect(r(Y))\geq \frac{V^2}{W}
  $$
  with
  $$
  V=\sum_{y\in G}f(y)\proba(X_1\in yB), \quad\quad
  W=\sum_{y\in G}f(y)^2.
  $$
  
  We pick $f(y)=\proba(X_2=y)$; in this case, we have
  $$
  V=\proba(Y\in B),\quad\quad W=\proba(Y=0),
  $$
  and therefore
  $$
  \expect(r(Y))\geq \frac{\proba(Y\in B)^2}{\rho_Y(0)}\geq
  \frac{\rho_Y(0)}{\alpha^2} |B|^2,
  $$
  where the last step follows from the assumption that $B\subset A$, so
  that $\proba(Y=y)\geq \alpha^{-1}\rho_Y(0)$ for $y\in B$.  Since we
  also assumed that $\rho_Y(0)|B|\geq \beta^{-1}$, this gives
  $\expect(r(Y))\geq \alpha^{-2}\beta^{-1}|B|$.

  Applying Lemma~\ref{lm-random-energy} to the random variable $Y$ and
  the set~$B$, we get
  $$
  e(B)\geq \frac{1}{4\alpha^8\beta^4\rho_Z(0)|B|},
  $$
  where~$Z$ is a stepping of~$Y$.  But we have
  $$
  \rho_Z(0)=\proba(Z=0)=\sum_{y\in G} \proba(Y=y)^2\leq
  \proba(Y=0)\sum_{y\in G}\proba(Y=y)=\proba(Y=0)=\rho_Y(0),
  $$
  and thus $\rho_Z(0)|B|\leq\rho_Y(0)|A|$, which is $\leq \alpha$ by
  Chebychev's inequality, so we get finally the lower bound
  $$
  e(B)\geq \frac{1}{4\alpha^8\beta^4\rho_Y(0)|B|}\geq
  \frac{1}{4\alpha^9\beta^4},
  $$
  as claimed.
\end{proof}

\section{Main steps of the proof}

We will describe in this section the strategy of the proof of
Theorem~\ref{th-bgk}, extracting two intermediate steps before the final
conclusion.

\par
\medskip
\par
\textbf{Step 1.} The first step is an estimate for a specific average of
values of the discrete Fourier transform of random variables on~$\Ff_p$,
which involves the ``peaking'' of Section~\ref{sec-constructs}.




\begin{proposition}\label{pr-alt-1}
  Let $p$ be a prime number. Let $X$ be an $\Ff_p$-valued random
  variable, and let~$Y=X_1-X_2$ be a stepping of~$X$ and~$\widehat{Y}$ a
  peaking of~$X$.

  Let~$\eta>0$ be a real number. There exists $\beta>0$, depending
  only on~$\eta$, such that
  \begin{equation}\label{eq-estimate}
    \expect(|\varphi_X(X\widehat{Y})|^2)\ll
    \rho_X(0)+\rho_Y(0)^{\beta}+
    \frac{p^{-1+\eta}}{\rho_Y(0)}.
  \end{equation}
\end{proposition}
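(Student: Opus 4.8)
The plan is to turn the left-hand side into a purely combinatorial average by Fourier inversion, reduce the estimate to a single sum $U$, and then split according to whether the law of $Y$ is ``spread out'' or ``concentrated'': the first case is soft, while the second is where the probabilistic lemmas of Section~\ref{sec-prob-lemmas} feed the Balog--Szemerédi--Gowers Theorem and, through it, the sum-product theorem. For a fixed $t\in\Ff_p$ one has $|\varphi_X(t)|^{2}=\varphi_Y(t)=\expect(e(tY'/p))$ with $Y'$ a fresh stepping, while by Lemma~\ref{lm-bgk-link} the characteristic function of $\widehat Y$ is $\varphi_{\widehat Y}(t)=\rho_Y(t)/\rho_Y(0)$. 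Taking $X$, $Y'$ and $\widehat Y$ independent and substituting,
\[
\expect\bigl(|\varphi_X(X\widehat Y)|^{2}\bigr)=\expect_{X,Y',\widehat Y}\bigl(e(XY'\widehat Y/p)\bigr)=\frac{1}{\rho_Y(0)}\sum_{a,b\in\Ff_p}\rho_X(a)\,\rho_Y(b)\,\rho_Y(ab).
\]
The term $a=0$ contributes exactly $\rho_X(0)$, the terms with $a\neq0$, $b=0$ contribute $\rho_Y(0)(1-\rho_X(0))\leq\rho_Y(0)\leq\rho_Y(0)^{\beta}$ once $\beta\leq1$, and we are left with
\[
U:=\sum_{a\neq0,\ b\neq0}\rho_X(a)\,\rho_Y(b)\,\rho_Y(ab)\ \ll\ \rho_Y(0)^{1+\beta}+p^{-1+\eta}.
\]

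\emph{The spread-out case.} For fixed $a\neq0$, Cauchy--Schwarz in $b$ gives $\sum_{b\neq0}\rho_Y(b)\rho_Y(ab)\leq\rho_Z(0)$, where $Z$ is a stepping of $Y$ and $\rho_Z(0)=\sum_y\rho_Y(y)^{2}$ (both factors $\sum_{b}\rho_Y(b)^2$ and $\sum_{b}\rho_Y(ab)^2$ are at most $\rho_Z(0)$); summing over $a$ yields $U\leq\rho_Z(0)$. Hence if $\rho_Z(0)\leq\rho_Y(0)^{1+\beta}$ we are done, and we may assume the ``concentrated'' inequality $\rho_Z(0)>\rho_Y(0)^{1+\beta}$.

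\emph{The concentrated case.} Now the law of $Y$ is $L^{2}$-heavy, so its top level set is large: applying Lemma~\ref{lm-proba-lower-bound} to $\rho_Y(Y)\leq\rho_Y(0)$, whose mean is $\rho_Z(0)$, the set $A=\{y:\rho_Y(y)\geq\tfrac{1}{2}\rho_Y(0)^{1+\beta}\}$ satisfies $\proba(Y\in A)\gg\rho_Y(0)^{\beta}$, hence $|A|\gg\rho_Y(0)^{-(1-\beta)}$; note also $|A|\leq2\rho_Y(0)^{-1-\beta}$. By Lemma~\ref{lm-stepping}, applied to $Y$ and $B=A$ inside the additive group $\Ff_p$, one gets $e(A)\geq\alpha^{-1}$ with $\alpha$ a bounded power of $\rho_Y(0)^{-\beta}$, so Theorem~\ref{th-schoen} produces $A'\subset A$ with $|A'|\gg\rho_Y(0)^{-1+O(\beta)}$ and $|A'-A'|\leq\rho_Y(0)^{-O(\beta)}|A'|$, and Ruzsa's triangle inequality upgrades this to small additive doubling $|A'+A'|\leq\rho_Y(0)^{-O(\beta)}|A'|$. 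Suppose now, for contradiction, that $U>C(\rho_Y(0)^{1+\beta}+p^{-1+\eta})$ with $C$ large. A dyadic decomposition of $U$ in the sizes of $\rho_X(a)$ and $\rho_Y(ab)$ — the contribution of levels below $p^{-C(\eta)}$ being $\ll p^{-1+\eta}$ — followed by a Cauchy--Schwarz and Lemma~\ref{lm-random-energy} shows that some level set $L$ of $\rho_Y$ has large multiplicative energy; since (in the present case) $\rho_Z(0)$ is carried by the top levels, $L$ may be taken to overlap $A$ substantially, and Theorem~\ref{th-schoen} in the multiplicative group $\Ff_p^{\times}$ yields $L'\subset L$ with small multiplicative doubling $|L'\cdot L'|\leq\rho_Y(0)^{-O(\beta)}|L'|$. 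Combining $A'$ and $L'$ one extracts a set $B$, still of size $\gg\rho_Y(0)^{-1+O(\beta)}$, with both small additive \emph{and} small multiplicative doubling. If $|B|\leq p^{1-\gamma}$ for a suitable $\gamma=\gamma(\eta)$ this contradicts Theorem~\ref{th-sum-product}, provided $\beta$ is chosen small enough (relative to the resulting $\delta>0$) that $\rho_Y(0)^{-O(\beta)}<|B|^{\delta}$; if instead $|B|>p^{1-\gamma}$, then $|A|>p^{1-\gamma}$, which together with $|A|\leq2\rho_Y(0)^{-1-\beta}$ forces $\rho_Y(0)\ll p^{-1+\gamma'}$ with $\gamma'\leq\eta$, whence already $U\leq\rho_Z(0)\leq\rho_Y(0)\ll p^{-1+\eta}$. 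In both cases the assumption fails, proving the required bound on $U$.

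\emph{Main obstacle.} The real work is in the concentrated case: one must verify the hypotheses of Lemma~\ref{lm-random-energy}/Lemma~\ref{lm-stepping} and of Theorem~\ref{th-sum-product} with quantitatively matching exponents, and — the delicate point — ensure that the additively structured piece $A'$ and the multiplicatively structured piece $L'$ can be intersected into a single large set $B$, so that the sum-product phenomenon can be invoked against it; the final choice of one $\beta=\beta(\eta)$ must make all the $\rho_Y(0)^{-O(\beta)}$ losses (and the $(\log p)^{O(1)}$ losses from the dyadic decomposition) negligible against the gain $\delta$ coming from Theorem~\ref{th-sum-product}.
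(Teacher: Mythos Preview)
Your Fourier reduction is correct and matches the paper: the identity
\[
\expect(|\varphi_X(X\widehat Y)|^{2})=\frac{1}{\rho_Y(0)}\,\expect(\rho_Y(XY))
\]
is exactly the paper's Lemma~\ref{lm-bgk-link2}, and isolating the contributions of $a=0$ and $b=0$ is fine. The split into ``spread-out'' and ``concentrated'' cases is not how the paper proceeds (it simply sets $\alpha^{-1}=\expect(|\varphi_X(X\widehat Y)|^{2})$ and works with~$\alpha$ throughout), but that is a matter of packaging.

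The genuine gap is the step you yourself flag as ``the delicate point'': combining the additively-structured subset $A'$ and the multiplicatively-structured subset $L'$ into a single large set $B$. You obtain $A'$ and $L'$ by two \emph{independent} applications of Theorem~\ref{th-schoen}, one in $(\Ff_p,+)$ and one in $(\Ff_p^{\times},\cdot)$. Even if the ambient level sets $A$ and $L$ overlap substantially, the Balog--Szemer\'edi--Gowers subsets $A'\subset A$ and $L'\subset L$ are produced by separate probabilistic arguments and there is no mechanism forcing $|A'\cap L'|$ to be large; indeed each could occupy a $\rho_Y(0)^{O(\beta)}$-fraction of its parent set, and such fractions need not meet. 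Without this intersection, you have no single set violating Theorem~\ref{th-sum-product}, and the contradiction collapses.

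The paper resolves this by \emph{nesting} the two applications of Theorem~\ref{th-schoen} rather than running them in parallel. It first shows (via the inner Lemma in the proof of Proposition~\ref{pr-exp-1}) that the full level set $A_2=\{y:\rho_Y(y)\geq\rho_Y(0)/(8\alpha)\}\setminus\{0\}$ has large \emph{multiplicative} energy, using the hypothesis $\expect(\rho_Y(XY))\geq\rho_Y(0)/\alpha$; BSG in $\Ff_p^{\times}$ then yields $A_3\subset A_2$ with small multiplicative doubling. Only then does it invoke Lemma~\ref{lm-stepping}, which---crucially---applies to \emph{any} large subset $B$ of the level set, to obtain large \emph{additive} energy of $A_3$; a second BSG, now in $(\Ff_p,+)$, gives $A_4\subset A_3$ with small additive doubling. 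Since $A_4\subset A_3$, one has $A_4\cdot A_4\subset A_3\cdot A_3$, so $A_4$ automatically inherits small multiplicative doubling as well. The order matters: the additive-energy Lemma~\ref{lm-stepping} is robust under passing to subsets, whereas the multiplicative-energy argument (the inner Lemma) is tied to the full level set and does not survive restriction to an arbitrary $A'\subset A_2$. Reversing the order, as you do, therefore loses the nesting and creates exactly the intersection problem you were unable to close.
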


\begin{remark}\label{rm-comments}
  (1) To get a feeling for this inequality, note the obvious lower
  bounds
  $$
  \expect(|\varphi_X(X\widehat{Y})|^2)\geq \proba(X=0),\quad\quad
  \expect(|\varphi_X(X\widehat{Y})|^2)\geq \proba(\widehat{Y}=0).
  $$

  The term $\rho_X(0)$ on the right-hand side of~(\ref{eq-estimate})
  accounts for the first of these, and the third term accounts for (a
  quantity larger than) the second, since by~(\ref{eq-rho1}), we have
  $$
  \proba(\widehat{Y}=0)=\frac{1}{M_X}=\frac{p^{-1}}{\rho_Y(0)}.
  $$

  (2) Although the bound~(\ref{eq-estimate}) may look conventional
  enough, it is in its proof that additive combinatorics is crucial. In
  other words: if~(\ref{eq-estimate}) could be proved ``with classical
  means'', i.e. without invoking the sum-product phenomenon, or the
  Balog--Szemerédi--Gowers Theorem, or other results from additive
  combinatorics, then this would give a ``classical'' proof of
  Theorem~\ref{th-bgk}.
  
  (3) In ``concrete'' terms, without probabilistic notation, the
  quantity $\expect(|\varphi_X(X\widehat{Y})|^2)$ is the average
  $$
  \frac{1}{M_X^2}\sum_{x\in\Ff_p}\sum_{a\in\Ff_p}\rho_X(x)
  |\varphi_X(a)|^2|\varphi_X(ax)|^2.
  $$

  From an analytic number theory point of view, this can be interpreted
  as a kind of ``amplified'' average of the values of
  $|\varphi_X|^2$. To see why this can be useful, take the random
  variable~$X$ to be uniformly distributed over a subgroup~$H$
  of~$\Ff_p^{\times}$. Observe (as we will repeat later) that
  $\varphi_{X}(ah)=\varphi_X(a)$ for any~$h\in H$ and~$a\not=0$; it
  follows that $\rho_Y(0)=1/|H|$, and a simple computation shows that
  $X\widehat{Y}$ is distributed like~$Y$ and that
  $$
  M_X=\sum_{a\in\Ff_p}\Bigl|\frac{1}{p} \sum_{x\in
    H}e\Bigl(\frac{ax}{p}\Bigr)\Bigr|^2=\frac{p}{|H|}.
  $$

  Therefore, for any~$a\in\Ff_p^{\times}$, we have a lower
  bound   
  $$
  \expect(|\varphi_X(X\widehat{Y})|^2)\geq
  |\varphi_X(a)|^2\proba(X\widehat{Y}\in H)\geq |\varphi_X(a)|^2
  \times
  |H|\frac{|\varphi_X(a)|^2}{M_X}=|\varphi_X(a)|^4\frac{|H|^2}{p}. 
  $$

  This shows that even the trivial
  bound~$\expect(|\varphi_X(X\widehat{Y})|^2)\leq 1$ is sufficient to
  deduce that $|\varphi_X(a)|^4\leq p|H|^{-2}$, which is non-trivial as
  soon as~$H$ has size a bit larger than $\sqrt{p}$ -- the same range in
  which a ``direct'' use of Gauss sums leads to a non-trivial bound.

  Furthermore, if we apply Proposition~\ref{pr-alt-1} instead of the
  trivial bound, with $\eta=\gamma/2$, say, then we get some $\beta>0$
  such that
  $$
  \frac{|H|^2}{p}|\varphi_X(a)|^4\ll
  \frac{1}{|H|}+\frac{1}{|H|^{1+\beta}} +\frac{|H|p^{\eta}}{p}
  $$
  hence
  $$
  |\varphi_X(a)|^4 \ll p^{1-3\gamma}+p^{1-(2+\beta)\gamma}+p^{-\gamma/2},
  $$
  which proves Theorem~\ref{th-bgk} when $|H|=p^{\gamma}$
  with~$\gamma>\max(\tfrac{1}{3},\tfrac{1}{2+\beta})$, hence also for
  $\gamma$ slightly smaller than $1/2$. \emph{This is already a highly
    non-trivial fact}. A result of that type was first proved by
  Shparlinski~\cite{shparlinski} (for $|H|$ a bit larger than
  $p^{3/7}$), using estimates of Garcia and Voloch on the number of
  points on Fermat curves over finite fields, also combined with a
  fourth moment computation.
\end{remark}

\par\medskip\par

\textbf{Step 2.} We now describe for which random variables we will
apply Proposition~\ref{pr-exp-1}.  Let~$H\subset \Ff_p^{\times}$ be a
multiplicative subgroup.  We fix a random variable~$S$ which is
uniformly distributed on~$H$ (so that $\rho_S(x)=0$ unless $x\in H$, in
which case $\rho_S(x)=1/|H|$). We denote by $(S_k)_{k\geq 1}$ a sequence
of independent random variables, all independent from~$S$ and also
uniformly distributed on~$H$.

We will consider the random variables
$$
X_k=S_1-S_2+\cdots +S_{2k-1}-S_{2k}
$$
for $k\geq 1$. Probabilistically, these correspond to a simple random
walk on~$\Ff_p$ where the steps are taken alternately from~$H$ and
from~$-H$ (so the picture could be simplified a bit in the case where
$-1\in H$, since then each $S_i$ would be distributed in the same way as
$-S_i$, and we would have a ``standard'' random walk). Note that
$$
\varphi_{X_k}(a)=|\varphi_{S}(a)|^{2k},
$$
by independence; moreover, note that
$$
X_{2k}=(S_1-S_2+\cdots +S_{2k-1}-S_{2k})-(S_{2k+2}-S_{2k+1}+\cdots
+S_{4k}-S_{4k-1}),
$$
which shows that $X_{2k}$ is a stepping of~$X_k$.

For $\nu>0$, we define the set
$$
\Lambda_{\nu}=\{a\in\Ff_p\,\mid\, |\varphi_{S}(a)|>p^{-\nu}\}.
$$

Note that $0\in\Lambda_{\nu}$ in all cases, and that, since
$$
\varphi_S(a)=\frac{1}{|H|}\sum_{x\in H}e\Bigl(\frac{ax}{p}\Bigr),
$$
we can restate Theorem~\ref{th-bgk} as claiming the existence of
some~$\nu>0$ such that $\Lambda_{\nu}$ \emph{only} contains~$0$. This is
therefore our objective. The following simple lemma encapsulates the
specific property of the distribution of the random variable $S$.

\begin{lemma}\label{lm-uniform-properties}
  For any $x\in H$, the random variable~$xS$ is uniformly distributed
  on~$H$.

  In particular the following properties hold:
  \begin{enumth}
  \item For any $a\in\Ff_p$, we have $\varphi_S(ax)=\varphi_S(a)$, and
    hence also $\varphi_{X_k}(ax)=\varphi_{X_k}(a)$.
  \item The set $\Lambda_{\nu}\setminus\{0\}$ is either empty or is a
    union of $H$-cosets. In the second case, we have
    $|\Lambda_{\nu}|\geq |H|$.
  \end{enumth}
\end{lemma}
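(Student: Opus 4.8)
The plan is to derive all three assertions from the single fact that, because $H$ is a finite subgroup of~$\Ff_p^{\times}$, multiplication by a fixed $x\in H$ is a bijection of~$H$ onto itself.

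For the first statement: if $y\in H$, then $\proba(xS=y)=\proba(S=x^{-1}y)=1/|H|$ since $x^{-1}y\in H$, while $\proba(xS=y)=0$ for $y\notin H$ (as $xS\in H$ always). Hence $xS$ is uniformly distributed on~$H$, i.e. it has the same law as~$S$. For property~(1), I would then write $\varphi_S(ax)=\expect(e(a(xS)/p))$ and use this equality of laws to conclude $\varphi_S(ax)=\expect(e(aS/p))=\varphi_S(a)$; the corresponding identity for~$\varphi_{X_k}$ is immediate from the relation $\varphi_{X_k}(a)=|\varphi_S(a)|^{2k}$ recorded just before the lemma, since $\varphi_{X_k}(ax)=|\varphi_S(ax)|^{2k}=|\varphi_S(a)|^{2k}=\varphi_{X_k}(a)$.

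For property~(2): by~(1), the defining condition $|\varphi_S(a)|>p^{-\nu}$ of~$\Lambda_{\nu}$ is unchanged when $a$ is replaced by~$ax$ with $x\in H$. Since $0\in\Lambda_{\nu}$ always, while $ax\neq 0$ whenever $a\neq 0$, the set $T=\Lambda_{\nu}\setminus\{0\}\subset\Ff_p^{\times}$ satisfies $T\cdot H=T$, and therefore $T=\bigcup_{a\in T}aH$ is a union of cosets of~$H$. If $T$ is non-empty it contains at least one full coset, of cardinality~$|H|$, so that $|\Lambda_{\nu}|\geq |T|\geq |H|$.

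There is no genuine difficulty here; the only points requiring (minor) care are the element~$0$, which belongs to~$\Lambda_{\nu}$ but lies in no $H$-coset inside~$\Ff_p^{\times}$ and must be excluded before speaking of cosets, and the passage from the $H$-invariance of~$\varphi_S$ to that of~$\varphi_{X_k}$, which the product formula renders automatic.
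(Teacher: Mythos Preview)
Your proof is correct and follows essentially the same approach as the paper's: both derive everything from the fact that multiplication by $x\in H$ permutes~$H$, hence preserves the law of~$S$, and then read off the $H$-invariance of $|\varphi_S|$ and of $\Lambda_{\nu}\setminus\{0\}$. The paper's own proof is simply a terser version of what you wrote.
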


\begin{proof}
  The first statement simply reflects the fact that~$H$ is a
  multiplicative subgroup of~$\Ff_p^{\times}$. The equality
  $\varphi_{S}(ax)=\varphi_S(a)$ follows, and it means that
  $aH\subset \Lambda_{\nu}$ whenever $a\in\Lambda_{\nu}\setminus\{0\}$,
  which gives the last fact.
\end{proof}

The content of the second step is as follows:

\begin{proposition}\label{pr-alt-2}
  Let $\theta>0$ be a real number. If $p$ is a large enough prime
  number, depending only on~$\theta$, then there exist a positive real
  number $\nu<\demi\theta$, depending only on~$\theta$, and an
  integer~$k\geq 1$ such that
  \begin{equation}\label{eq-m-bound}
    p^{-1-\theta}|\Lambda_{\nu}|\leq\rho_{X_{2k}}(0)
    \leq p^{-1+\theta}|\Lambda_{\nu}|
  \end{equation}
  and
  \begin{equation}\label{eq-last-bound}
    \expect(|\varphi_{X_k}(X_k\widehat{X}_{2k})|^2)\geq p^{-10\theta}.
  \end{equation}
\end{proposition}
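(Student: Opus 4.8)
The plan is to iterate a self-improving argument over the random walks $X_k$, using the "large subgroup" structure of $\Lambda_\nu$ to bootstrap from a trivial starting point. Recall from Lemma~\ref{lm-uniform-properties} that $\Lambda_\nu\setminus\{0\}$ is a union of $H$-cosets, hence either empty or of size $\geq |H|$; and that $\rho_{X_{2k}}(0) = M_{X_k}/p = \frac{1}{p}\sum_a |\varphi_S(a)|^{4k}$ by~(\ref{eq-rho1}) and independence. The two asserted conclusions are of opposite type: (\ref{eq-m-bound}) says the $4k$-th moment of $|\varphi_S|$ is, up to $p^{\pm\theta}$, just $|\Lambda_\nu|/p$ — i.e.\ the sum is dominated by the contribution of $\Lambda_\nu$, where $|\varphi_S|$ is close to $1$; and (\ref{eq-last-bound}) is the lower bound~$p^{-10\theta}$ on the amplified average.

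**First I would** dispose of the case where $\Lambda_\nu$ is as small as possible for \emph{every} $\nu$ below $\tfrac12\theta$: if for some $\nu$ the set $\Lambda_\nu$ only contains $0$, Theorem~\ref{th-bgk} is already proved and there is nothing to do (or rather: one can then verify the proposition trivially, since the hypotheses of Proposition~\ref{pr-alt-2} are consistent with that outcome). So assume $|\Lambda_\nu| \geq |H|$ for all relevant~$\nu$. Next, using the identification of $\expect(|\varphi_{X_k}(X_k\widehat X_{2k})|^2)$ with the concrete triple sum from Remark~\ref{rm-comments}(3), namely $\frac{1}{M_{X_k}^2}\sum_{x,a}\rho_{X_k}(x)|\varphi_{X_k}(a)|^2|\varphi_{X_k}(ax)|^2$, I would lower-bound it by restricting $a$ to $\Lambda_\nu$ (where $|\varphi_{X_k}(a)| = |\varphi_S(a)|^{2k} > p^{-2k\nu}$) and using that $\varphi_{X_k}(ax)=\varphi_{X_k}(a)$ when $x\in H$, together with $\proba(X_k\in H) \geq \rho_{X_k}$ restricted appropriately. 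This gives a bound of the shape $\expect(\cdots) \gg \big(\frac{|\Lambda_\nu|}{M_{X_k}}\big)^2 p^{-8k\nu} \cdot (\text{mass of } X_k \text{ in } H\text{-direction})$, which one wants to be $\geq p^{-10\theta}$.

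**The main work** is choosing $k$ and $\nu$ simultaneously so that (\ref{eq-m-bound}) holds. The sum $\sum_a |\varphi_S(a)|^{4k}$ naturally splits at the threshold $p^{-\nu}$: the part over $\Lambda_\nu$ contributes something between $|\Lambda_\nu| p^{-4k\nu}$ (too weak) and $|\Lambda_\nu|$; the part over the complement contributes at most $p\cdot p^{-4k\nu}$. For (\ref{eq-m-bound}) to hold with exponent $\theta$, I would use a pigeonhole / dyadic-pigeonhole argument: consider the decreasing sequence $k\mapsto \rho_{X_{2^j}}(0)$ (or more flexibly all $k$), whose ratios $\rho_{X_{2k}}(0)/\rho_{X_k}(0)$ cannot all be much smaller than $p^{-\theta}$ for too many steps, since $\rho_{X_k}(0) \geq 1/p$ always (as $X_k$ takes at most $p$ values) while $\rho_{X_1}(0) = 1/|H| \leq p^{-\gamma}$. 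So among boundedly many (in terms of $1/\theta$) choices of $k$, there is one where consecutive moments are comparable up to $p^{\theta}$, which forces the tail beyond the threshold to be negligible and pins $\rho_{X_{2k}}(0)$ to $|\Lambda_\nu|/p$ up to $p^{\pm\theta}$ — with $\nu$ then determined (within $(0,\tfrac12\theta)$) by where $|\varphi_S|^{4k}$ crosses, say, a suitable power of $\rho_{X_{2k}}(0)$. The delicate bookkeeping is making the \emph{same} pair $(k,\nu)$ serve both inequalities: a large $k$ helps kill the tail (good for (\ref{eq-m-bound})) but shrinks $p^{-8k\nu}$ in the lower bound for (\ref{eq-last-bound}), so $k$ must stay bounded by a function of $\theta$ and $\nu$ must be taken small enough — of order $\theta/k$ — that $p^{-8k\nu} \geq p^{-O(\theta)}$.

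**The hard part will be** this coupling: running the pigeonhole on $k$ while keeping $\nu$ both small (so $p^{-8k\nu}$ is harmless in~(\ref{eq-last-bound})) and large enough that $\Lambda_\nu$ still has the full $|H|$-coset structure and that the moment sum is genuinely concentrated on it. I expect the argument to fix a small auxiliary exponent (a constant multiple of $\theta$), run the dyadic pigeonhole over $k\in\{1,2,\dots,K\}$ with $K = O(1/\theta)$, extract the good $k$, then \emph{define} $\nu = c\theta/K$ for a suitable absolute constant $c$, and finally check that with this $\nu$ the concentration estimate~(\ref{eq-m-bound}) survives (because $p^{-4k\nu}$ is at most $p^{-\text{const}\cdot\theta}$ times the main term) and the lower bound~(\ref{eq-last-bound}) comes out as $p^{-O(\theta)}$, absorbing all implied constants into the exponent by taking $p$ large. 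Everything else — the identities for $\varphi_{X_k}$, $\rho_{X_{2k}}(0) = M_{X_k}/p$, and the fact that $X_{2k}$ is a stepping of $X_k$ — is already in hand from Sections~\ref{sec-constructs} and the definitions above.
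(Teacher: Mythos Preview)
Your pigeonhole strategy for~(\ref{eq-m-bound}) is in the right spirit, but the paper's iteration is organized differently: it passes from~$k$ to a much larger~$k_+$ and compares $|\Lambda_{1/k}|$ with $|\Lambda_{1/k_+}|$, using the \emph{large} threshold~$1/k$ to kill the tail via $M_{X_k}\leq|\Lambda_{1/k}|+p^{-3}$, while the \emph{small} value $\nu=1/k_+$ is the one appearing in the statement. Your plan of fixing a single $\nu=c\theta/K$ runs into trouble in the upper bound of~(\ref{eq-m-bound}): for the extracted~$k$ one may have $4k\nu$ much smaller than~$\theta$, and then the tail $p\cdot p^{-4k\nu}$ in the splitting $M_{X_k}\leq|\Lambda_\nu|+p^{1-4k\nu}$ is not dominated by $p^{\theta}|\Lambda_\nu|$, so you cannot conclude $M_{X_k}\leq p^{\theta}|\Lambda_\nu|$. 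Two thresholds (one for the tail, one for the statement) are needed, and the iteration links them.

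The more serious gap is your route to~(\ref{eq-last-bound}). Restricting the inner variable~$x$ to~$H$ and invoking $\varphi_{X_k}(ax)=\varphi_{X_k}(a)$ forces you to control $\proba(X_k\in H)$ from below; but $X_k$ is an \emph{additive} random walk with steps in $\pm H$, and there is no reason whatsoever for it to land in the \emph{multiplicative} subgroup~$H$ with probability better than about $|H|/p$, which would destroy any saving. The paper does not restrict~$x$ at all. It instead proves the pointwise inequality
\[
\expect\bigl(|\varphi_{X_k}(aX_k)|^2\bigr)\;\geq\; \varphi_{X_k}(a)^{4k}
\]
for every~$a$: rewrite the left side as $\expect(\varphi_{X_k}(aX_{2k}))=\expect(|\varphi_S(aX_{2k})|^{2k})$ by a Fubini swap, apply Jensen to pull the $2k$-th power outside, and then swap again to turn $\expect(\varphi_S(aX_{2k}))$ into $\expect(|\varphi_{X_k}(aS)|^2)$. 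Only at this last step does the $H$-invariance enter --- and it applies because $S$, not $X_k$, is $H$-valued, giving $\varphi_{X_k}(aS)=\varphi_{X_k}(a)$ almost surely. Averaging the pointwise inequality over $\widehat X_{2k}$ restricted to~$\Lambda_\nu$ then yields~(\ref{eq-last-bound}). This Jensen--Fubini manoeuvre, shifting the $H$-invariance from $X_k$ to~$S$, is the missing idea in your proposal.
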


\par\medskip\par

\textbf{Step 3.} We now conclude the proof of
Theorem~\ref{th-bgk}. Recall that $|H|\geq p^{\gamma}$ by assumption; we
pick $\theta>0$ such that $10\theta<\gamma$. Applying
Proposition~\ref{pr-alt-2} and then Proposition~\ref{pr-alt-1}, for some
$\eta>0$ to be determined later, we find random variables $X=X_k$ and
$Y=X_{2k}$ satisfying the bounds~(\ref{eq-m-bound}) and such that
$$
p^{-10\theta}\leq \expect(|\varphi_X(X\widehat{Y})|^2)=
\expect(|\varphi_{X_k}(X_k\widehat{X}_{2k})|^2) \ll
\rho_X(0)+\rho_Y(0)^{\beta}+\frac{p^{-1+\eta}}{\rho_Y(0)},
$$
for some $\beta>0$.

The first term is easily handled: by induction on~$k$, we find that
$$
\proba(X_k=0)\leq \max_{x\in\Ff_p}\proba(S=x)=\frac{1}{|H|}
$$
for any $k\geq 1$, hence the assumption $|H|\geq p^{\gamma}$ gives
$$
p^{-10\theta}\ll p^{-\gamma}+\rho_Y(0)^{\beta}+
\frac{p^{-1+\eta}}{\rho_Y(0)}.
$$

Using~(\ref{eq-m-bound}) to estimate $\rho_Y(0)$ in terms of
$|\Lambda_{\nu}|$, this becomes
$$
p^{-10\theta}\ll
p^{-\gamma}+\Bigl(\frac{|\Lambda_{\nu}|}{p^{1-\theta}}\Bigr)^{\beta}
+\frac{p^{\eta+\theta}}{|\Lambda_{\nu}|}.
$$

We always have $|\Lambda_{\nu}|\leq p^{1+2\nu}|H|^{-1}$ by Chebychev's
inequality. Moreover, if we assume that~$\Lambda_{\nu}$ is not reduced
to~$0$, then this set contains at least $|H|\geq p^{\gamma}$
elements. Recalling that $2\nu<\eta$, we would then get the bounds
$$
p^{-10\theta}\ll p^{\beta(2\nu+\theta-\gamma})+p^{\eta+\theta-\gamma} 
\ll p^{\beta(\eta+\theta-\gamma)}+p^{\eta+\theta-\gamma},
$$
which is impossible for~$p$ large enough if $\eta$ is chosen small
enough in terms of $\gamma$.  Thus we must have $\Lambda_{\nu}=\{0\}$,
and (by definition) this means that
$$
\Bigl|\frac{1}{|H|}\sum_{x\in H}e\Bigl(\frac{ax}{p}\Bigr)\Bigr|\leq
p^{-\nu}
$$
for all~$a\in\Ff_p^{\times}$, provided $p$ is large enough.

\section{Completion of the proof}

We now prove Propositions~\ref{pr-alt-1} and~\ref{pr-alt-2}. The
sum-product theorem appears decisively in the proof of the first of
these, and more precisely in the following key proposition.

\begin{proposition}\label{pr-exp-1}
  Let $p$ be a prime number. Let $X$ be an $\Ff_p$-valued random
  variable, and let~$Y=X_1-X_2$ be a stepping of~$X$ as above. Let
  $\alpha\geq 1$ be a real number such that
  \begin{equation}\label{eq-bgk-cond2}
    \expect(\rho_Y(XY))\geq \frac{\rho_Y(0)}{\alpha}.
  \end{equation}

  Assuming that
  \begin{equation}\label{eq-bgk-cond1}
    \proba(X=0)\leq \frac{1}{4\alpha},\quad\quad \proba(Y=0)\leq
    \frac{1}{4\alpha},
  \end{equation}
  there exists a subset $A\subset \Ff_p^{\times}$ such that
  $$
  \frac{1}{2^{31}\alpha^{10} \rho_Y(0)}\leq |A|\leq
  \frac{8\alpha }{\rho_Y(0)}
  $$
  with the property that
  $$
  \max(|A+A|,|A\cdot A|)\leq 2^{878}\alpha^{294}|A|.
  $$
\end{proposition}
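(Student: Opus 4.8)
The plan is to distil from the hypothesis one level set of the density~$\rho_Y$, to show that this set is simultaneously multiplicatively and additively structured, and then to run Theorem~\ref{th-schoen} once in~$\Ff_p^{\times}$ and once in~$(\Ff_p,+)$, cleaning up with the Ruzsa triangle and Plünnecke--Ruzsa inequalities in those two abelian groups.

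\emph{Extracting the level set.} Write $m=\rho_Y(0)$. Since $\rho_Y(XY)\leq m$ pointwise and the degenerate contribution is $\expect(\rho_Y(XY)\charfun_{XY=0})=m\,\proba(X=0\text{ or }Y=0)\leq m/(2\alpha)$ by~\eqref{eq-bgk-cond1}, the hypothesis~\eqref{eq-bgk-cond2} gives $\sum_{x\neq0}\rho_X(x)\sum_{y\neq0}\rho_Y(y)\rho_Y(xy)=\expect(\rho_Y(XY)\charfun_{XY\neq0})\geq m/(2\alpha)$. Set
\[
A=\Bigl\{z\in\Ff_p^{\times}\ :\ \rho_Y(z)\geq\tfrac{m}{8\alpha}\Bigr\}.
\]
Discarding from the double sum the terms with $\rho_Y(y)<m/(8\alpha)$ or $\rho_Y(xy)<m/(8\alpha)$ costs at most $2\cdot m/(8\alpha)$ (as $\sum_y\rho_Y(xy)=1$ for $x\neq0$), and what survives is at most $m^{2}\sum_{x\neq0}\rho_X(x)\,r_{A\cdot A^{-1}}(x)$, using $r_{A\cdot A^{-1}}(x)=|A\cap xA|$; hence $\sum_{x\neq0}\rho_X(x)\,r_{A\cdot A^{-1}}(x)\geq 1/(4\alpha m)$, which forces $|A|\geq 1/(4\alpha m)$ (since $r_{A\cdot A^{-1}}\leq|A|$), while $\sum_{z\in A}\rho_Y(z)\leq1$ forces $|A|\leq 8\alpha/m$ --- already the upper bound in the statement. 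The role of~\eqref{eq-bgk-cond1} here is precisely to prevent the degenerate terms ($XY=0$, a point lying outside~$\Ff_p^{\times}$) from swamping the estimate; and if $m$ were so large that $1/(4\alpha m)<1$ the statement is vacuous (a singleton works), so we may assume $A$ is genuinely large.

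\emph{Multiplicative energy --- the crux.} Let $\tilde X$ be~$X$ conditioned on $X\neq0$. Then $\expect(r_{A\cdot A^{-1}}(\tilde X))=\proba(X\neq0)^{-1}\sum_{x\neq0}\rho_X(x)\,r_{A\cdot A^{-1}}(x)\geq 1/(4\alpha m)\geq|A|/(32\alpha^{2})$ by the previous step and $|A|\leq 8\alpha/m$. Applying Lemma~\ref{lm-random-energy} in the group~$\Ff_p^{\times}$ to~$\tilde X$, its multiplicative stepping, and the set~$A$ --- and noting $\sum_{x\neq0}\rho_{\tilde X}(x)^{2}\leq\proba(X\neq0)^{-2}\sum_{x}\rho_X(x)^{2}=\proba(X\neq0)^{-2}\rho_Y(0)=O(m)$ (the equality $\sum_x\rho_X(x)^{2}=\rho_Y(0)$ holding because $Y=X_1-X_2$ is a stepping of~$X$), so that this quantity times~$|A|$ is $O(\alpha)$ --- yields $e(A)\gg\alpha^{-O(1)}$ in~$\Ff_p^{\times}$. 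This is the point I expect to be the main obstacle: making the passage from ``$\rho_Y$ correlates with a $\rho_X$-popular family of its multiplicative dilates'' to ``$A$ has many pairs with a common ratio'' quantitatively clean, while keeping track of the element~$0$ (around which $\Ff_p$, unlike $\Ff_p^{\times}$, is not a group) and of the successive threshold losses. Note that multiplicative structure of~$A$ cannot be automatic --- a level set of the autocorrelation $\rho_Y$ can be an arithmetic progression --- so the product $XY$ in~\eqref{eq-bgk-cond2} is genuinely used, exactly as it must be for Theorem~\ref{th-bgk}.

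\emph{Additive energy, and conclusion.} By contrast, $A$ has large \emph{additive} energy nearly for free: $A\subseteq\{z:\proba(Y=z)\geq\rho_Y(0)/(8\alpha)\}$, so Lemma~\ref{lm-stepping} applied in~$(\Ff_p,+)$, where $Y=X_1-X_2$ is literally a stepping, together with $|A|\geq1/(4\alpha m)$, gives $e(A)\gg\alpha^{-O(1)}$ in~$(\Ff_p,+)$ --- and likewise for any subset of~$A$ of size $\gg1/(\alpha^{O(1)}m)$. With both large energies available, I would apply Theorem~\ref{th-schoen}: first in~$\Ff_p^{\times}$ to~$A$, producing $B\subseteq A$ with $|B|\gg|A|/\alpha^{O(1)}$ and $|B\cdot B^{-1}|\ll\alpha^{O(1)}|B|$; then, using that $B$ is still a large enough subset of the level set (so $e(B)\gg\alpha^{-O(1)}$ in~$(\Ff_p,+)$), in~$(\Ff_p,+)$ to~$B$, producing $C\subseteq B$ with $|C|\gg|B|/\alpha^{O(1)}$ and $|C-C|\ll\alpha^{O(1)}|C|$. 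Since $C\subseteq B$, the Ruzsa triangle inequality gives $|C\cdot C^{-1}|\leq|B\cdot B^{-1}|^{2}/|B|\ll\alpha^{O(1)}|C|$, and --- both groups being abelian --- the Plünnecke--Ruzsa inequality then turns $|C-C|$ and $|C\cdot C^{-1}|$ small into $\max(|C+C|,|C\cdot C|)\ll\alpha^{O(1)}|C|$. Taking $A:=C\subseteq\Ff_p^{\times}$ gives $|A|\leq 8\alpha/m$ and $|A|\gg1/(\alpha^{O(1)}m)$, and tracking the exponents through the multiplicative-energy step, the two applications of Theorem~\ref{th-schoen}, and the Plünnecke steps produces the explicit constants, the numbers $2^{31}\alpha^{10}$ and $2^{878}\alpha^{294}$ being comfortable over-estimates; some care in the ordering and bookkeeping (running the more expensive multiplicative step first, or arranging a common large subset in place of naively nesting the two applications) is what keeps the final set above the stated lower bound.
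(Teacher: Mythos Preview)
Your proposal is correct and follows essentially the same route as the paper's own proof: the same level set $\{z\in\Ff_p^{\times}:\rho_Y(z)\geq\rho_Y(0)/(8\alpha)\}$, the same appeal to Lemma~\ref{lm-random-energy} for its multiplicative energy and Lemma~\ref{lm-stepping} for the additive energy of the nested subset, and the same two applications of Theorem~\ref{th-schoen} in the same order (multiplicative first, then additive). The only cosmetic differences are that you condition on $X\neq 0$ before working in~$\Ff_p^{\times}$ (a point the paper glosses over), and that your final Ruzsa-triangle step is unnecessary --- the trivial inclusion $C\cdot C^{-1}\subset B\cdot B^{-1}$ already gives what you need before invoking Pl\"unnecke--Ruzsa.
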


\begin{remark}
  As already indicated, the constants should really be interpreted as
  being of the form $c\alpha^d$ for some absolute constants $c>0$ and
  $d>0$.
\end{remark}




\begin{remark}
  The use of the random variable $\widehat{Y}$ (which emphasizes
  values~$a\in\Ff_p$ where $|\varphi_X(a)|^2$ is ``large'') is
  reminiscent of the similar use of a non-uniform distribution in the
  proof of Theorem~\ref{th-schoen}.
\end{remark}

\begin{proof}
  We will use frequently the fact that $\rho_Y(y)\leq \rho_Y(0)$ for
  all~$y\in \Ff_p$, which we already mentioned.

  We define
  $$
  A_1=\Bigl\{y\in\Ff_p\,\mid\, \rho_Y(y)\geq
  \frac{\rho_Y(0)}{8\alpha}\Bigr\}
  $$
  and $A_2=A_1\setminus\{0\}\subset \Ff_p^{\times}$ (note
  that~$0\in A_1$).  The main properties of~$A_2$ are given by the next
  lemma.

  \begin{lemma}
    We have
    \begin{equation}\label{eq-bgk-lb2}
      \frac{1}{4\alpha \rho_Y(0)}\leq |A_2|\leq \frac{8\alpha}{\rho_Y(0)},    
    \end{equation}
    and the representation function~$r_2$ for $A_2\cdot A_2^{-1}$
    satisfies
    \begin{equation}\label{eq-r2-lb}
      \expect(r_2(X))\geq \frac{|A_2|}{32\alpha^2}.
    \end{equation}
  \end{lemma}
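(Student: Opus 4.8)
The plan is to deduce both \eqref{eq-bgk-lb2} and \eqref{eq-r2-lb} from one substantial estimate, namely $\expect(r_2(X))\ge \tfrac{1}{4\alpha\rho_Y(0)}$, together with one elementary counting bound. That bound is the upper estimate in \eqref{eq-bgk-lb2}: since $\sum_{y\in\Ff_p}\rho_Y(y)=1$ and every $y\in A_1$ contributes at least $\rho_Y(0)/(8\alpha)$, we get $|A_2|\le|A_1|\le 8\alpha/\rho_Y(0)$ at once.

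For the main estimate I would work \emph{directly} with $\expect(\rho_Y(XY))=\sum_{x,y}\rho_X(x)\rho_Y(y)\rho_Y(xy)$, resisting the urge to replace hypothesis \eqref{eq-bgk-cond2} by the weaker $\proba(XY\in A_2)\gtrsim 1/\alpha$ (see below). Split the pairs $(x,y)$ into three classes: $xy=0$; $xy\ne 0$ with $\rho_Y(xy)<\rho_Y(0)/(8\alpha)$; and $xy\in A_2$. The first class contributes $\rho_Y(0)\proba(XY=0)\le\rho_Y(0)\bigl(\proba(X=0)+\proba(Y=0)\bigr)\le\rho_Y(0)/(2\alpha)$ by \eqref{eq-bgk-cond1}; the second contributes at most $\tfrac{\rho_Y(0)}{8\alpha}\sum_{x,y}\rho_X(x)\rho_Y(y)=\tfrac{\rho_Y(0)}{8\alpha}$. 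Hence, using \eqref{eq-bgk-cond2}, the third-class sum $S$ satisfies $S\ge\tfrac{3\rho_Y(0)}{8\alpha}$. Now split $S$ according to whether $y\in A_2$ (on this class $xy\ne0$, so $y\ne0$ and $y\notin A_2\iff\rho_Y(y)<\rho_Y(0)/(8\alpha)$). On the part with $y\notin A_2$ bound $\rho_Y(y)$ by $\rho_Y(0)/(8\alpha)$ and then sum out the rest: for fixed $x\ne0$ the map $y\mapsto xy$ is a bijection of $\{y:xy\in A_2\}$ onto $A_2$, so $\sum_{x,y:\,xy\in A_2}\rho_X(x)\rho_Y(xy)=\sum_{x\ne0}\rho_X(x)\sum_{z\in A_2}\rho_Y(z)=\proba(X\ne0)\proba(Y\in A_2)\le1$; thus this part is $\le\rho_Y(0)/(8\alpha)$, and the remaining part $S'$ (with $y\in A_2$) is $\ge\tfrac{3\rho_Y(0)}{8\alpha}-\tfrac{\rho_Y(0)}{8\alpha}=\tfrac{\rho_Y(0)}{4\alpha}$. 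On $S'$ bound both $\rho_Y(y)$ and $\rho_Y(xy)$ by $\rho_Y(0)$, and use the bijection $y\mapsto(xy,y)$ to identify $|\{y\in A_2:xy\in A_2\}|$ with $r_2(x)$; this gives $S'\le\rho_Y(0)^2\sum_x\rho_X(x)r_2(x)=\rho_Y(0)^2\expect(r_2(X))$, whence $\expect(r_2(X))\ge\tfrac{1}{4\alpha\rho_Y(0)}$.

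The remaining assertions then drop out: since $r_2\le|A_2|$ pointwise, $\expect(r_2(X))\le|A_2|$, giving the lower bound $|A_2|\ge\tfrac{1}{4\alpha\rho_Y(0)}$ in \eqref{eq-bgk-lb2}; and combining the already-proved $|A_2|\le 8\alpha/\rho_Y(0)$ with the main estimate gives $\expect(r_2(X))\ge\tfrac{1}{4\alpha\rho_Y(0)}\ge\tfrac{|A_2|}{32\alpha^2}$, which is \eqref{eq-r2-lb}. The only genuine obstacle is the temptation flagged above: if one first passes to $\proba(XY\in A_2)$ and expands it as $\sum_y\rho_Y(y)\proba(Xy\in A_2)$, then discarding the terms with $\rho_Y(y)$ small leaves an error whose inner sum is $\sum_y\proba(Xy\in A_2)=|A_2|\proba(X\ne0)$, i.e.\ an error of order $\rho_Y(0)|A_2|/\alpha$ that is not negligible against the main term since $\rho_Y(0)|A_2|$ can be as large as $8\alpha$. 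Keeping the weight $\rho_Y(xy)$ throughout is precisely what turns that inner sum into $\sum_{z\in A_2}\rho_Y(z)\le1$; everything else is reindexing and the bound $\rho_Y\le\rho_Y(0)$.
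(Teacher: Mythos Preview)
Your proof is correct and follows essentially the same approach as the paper: both arguments split $\expect(\rho_Y(XY))$ according to whether $X=0$ (or $XY=0$), whether $XY$ lies in the ``large'' set, and whether $Y$ does, then bound the surviving piece above by $\rho_Y(0)^2\,\expect(r_2(X))$ to obtain $\expect(r_2(X))\ge 1/(4\alpha\rho_Y(0))$. Your organization is slightly cleaner in two places---you work with $A_2$ throughout rather than passing back and forth between $A_1$ and $A_2$, and you extract the lower bound $|A_2|\ge 1/(4\alpha\rho_Y(0))$ directly from $r_2\le |A_2|$ rather than from an intermediate inequality---but these are cosmetic rearrangements of the same decomposition.
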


  \begin{proof}
    First, simply by Chebychev's inequality, we have
    \begin{equation}\label{eq-a2-up}
      |A_2|\leq |A_1|\leq \frac{8\alpha}{\rho_Y(0)}.
    \end{equation}
    
    We now claim that the assumption~(\ref{eq-bgk-cond2}), namely
    $$
    \expect(\rho_Y(XY))\geq \frac{\rho_Y(0)}{\alpha},
    $$
    together with~(\ref{eq-bgk-cond1}), implies that
    \begin{equation}\label{eq-bgk-cond3}
      \expect(\rho_Y(XY)\charfun_{X\not=0,\, Y\in A_1\cap
        X^{-1}A_1})\geq \frac{\rho_Y(0)}{2\alpha}.
    \end{equation}
    
    This is a matter of showing that the contributions
    to~$\expect(\rho_Y(XY))$ from the complementary event, where $X=0$
    or $Y\notin A_1$, or $XY\notin A_1$, are small enough. And indeed,
    first of all the first part of~(\ref{eq-bgk-cond1}) gives the upper
    bound
    $$
    \expect\Bigl(\rho_Y(XY)\charfun_{X=0}\Bigr)=\rho_Y(0)\proba(X=0) \leq
    \frac{\rho_Y(0)}{4\alpha},
    $$
    while
    $$
    \expect(\rho_Y(XY)\charfun_{X\not=0,\ XY\notin A_1}) \leq
    \frac{1}{8\alpha}\expect(\rho_Y(XY))\leq \frac{\rho_Y(0)}{8\alpha}.
    $$
    
    To bound the last contribution with $X\not=0$ and $Y\notin A_1$, we
    write
    $$
    \expect(\rho_Y(XY)\charfun_{X\not=0,\ Y\notin A_1}) =\sum_{y\notin
      A_1} \expect(\rho_Y(XY)\charfun_{X\not=0,\, Y=y})= \sum_{y\notin
      A_1} \expect(\rho_Y(yX)\charfun_{X\not=0,\, Y=y}).
    $$
    
    Using the independance of~$X$ and~$Y$, we deduce that
    \begin{multline*}
      \expect(\rho_Y(XY)\charfun_{X\not=0,\ Y\notin A_1}) =\sum_{y\in
        \Ff_p\setminus A_1}\proba(Y=y)\expect(\rho_Y(yX)\charfun_{X\not=0})
      \\
      \leq \frac{\rho_Y(0)}{8\alpha}\expect\Bigl(\sum_{y\notin
        A_1}\rho_Y(yX)\charfun_{X\not=0} \Bigr) \leq \frac{\rho_Y(0)}{8\alpha}
      \expect\Bigl(\sum_{y\in\Ff_p}\rho_Y(yX)\charfun_{X\not=0}\Bigr) \leq
      \frac{\rho_Y(0)}{8\alpha},
    \end{multline*}
    using in the last step the fact that, for any given $x\not=0$, we
    have
    $$
    \sum_{y\in\Ff_p}\rho_Y(yx)=\proba(Y\not=0)\leq 1.
    $$
    
    We next deduce from~(\ref{eq-bgk-cond3}) a lower-bound for~$|A_1|$
    complementing the upper-bound~(\ref{eq-a2-up}), namely
    \begin{equation}\label{eq-bgk-lb1}
      \frac{1}{2\alpha \rho_Y(0)}\leq |A_1|\leq \frac{8\alpha}{\rho_Y(0)},
    \end{equation}
    which in turn implies that $|A_1|\geq 2$ (by~(\ref{eq-bgk-cond1})
    since $\rho_Y(0)=\proba(Y=0)$), and therefore also
    $|A_2|=|A_1|-1\geq \demi |A_1|$, hence
    $$
    \frac{1}{4\alpha \rho_Y(0)}\leq |A_2|\leq
    \frac{8\alpha}{\rho_Y(0)},
    $$
      
    Indeed, we obtain~(\ref{eq-bgk-lb1}) by noting that,
    by~(\ref{eq-bgk-cond3}), we have
    $$
    \frac{\rho_Y(0)}{2\alpha}\leq \expect(\rho_Y(XY)\charfun_{X\not=0,\
      Y\in A_1})\leq \rho_Y(0)\proba(Y\in A_1)\leq \rho_Y(0)^2|A_1|.
    $$
    
    The next step is to relate the bound~(\ref{eq-bgk-cond3}) to the
    representation function~$r_2$ for~$A_2\cdot A_2^{-1}$. For this, we
    start with the formula
    $$
    \expect(r_2(X))=\sum_{y,z\in A_2}\proba(X=y^{-1}z)=
    \sum_{y\in A_2}\expect\Bigl(\sum_{z\in A_2}\proba(yX=z)\Bigr)=
    \sum_{y\in A_2}\proba(yX\in A_2).
    $$
    
    On the other hand, by independance of~$X$ and~$Y$, we have
    \begin{multline*}
      \expect(\rho_Y(XY)\charfun_{X\not=0,\, Y\in A_1\cap X^{-1}A_1})
      =\sum_{y\in A_1}\rho_Y(y)\expect(\rho_Y(yX)\charfun_{X\not=0,\,
        yX\in A_1})
      \\
      \leq \rho_Y(0)^2 \expect\Bigl(\sum_{y\in A_1}\charfun_{X\not=0,\
        yX\in A_1}\Bigr)=\rho_Y(0)^2\sum_{y\in A_1}\proba(X\not=0\text{
        and } yX\in A_1).
    \end{multline*}
    
    Isolating the contribution of~$y=0\in A_1$, we then have
    $$
    \sum_{y\in A_1}\proba(X\not=0\text{ and }
    yX\in A_1)=\proba(X\not=0)+\expect(r_2(X))\\
    \leq 1+\expect(r_2(X)),
    $$
    and thus~(\ref{eq-bgk-cond3}) implies that
    $$
    \frac{\rho_Y(0)}{2\alpha}\leq 
    \rho_Y(0)^2\expect(r_2(X))+\rho_Y(0)^2.
    $$
    
    The assumption $\proba(Y=0)=\rho_Y(0)\leq (4\alpha)^{-1}$
    (see~(\ref{eq-bgk-cond1})) now leads to the lower-bound
    $$
    \expect(r_2(X))\geq \frac{1}{4\alpha \rho_Y(0)}\geq
    \frac{|A_2|}{32\alpha^2},
    $$
    concluding the proof.
  \end{proof}
  
  Using~(\ref{eq-r2-lb}), we can apply Lemma~\ref{lm-random-energy} to
  the random variable~$X$ on~$\Ff_p^{\times}$, with $\beta=32\alpha^2$;
  we obtain
  $$
  e(A_2)\geq \frac{1}{2^{22}\alpha^8\rho_Y(0)|A_2|}\geq
  \frac{1}{2^{25}\alpha^9},
  $$
  and therefore, by the Balog--Szemerédi--Gowers Theorem
  (Theorem~\ref{th-schoen}, applied to~$A_2\subset \Ff_p^{\times}$),
  there exists a subset $A_3\subset A_2$ with
  $$
  |A_2|\leq 4(2^{25}\alpha^9)|A_3|=2^{27}\alpha^9|A_3|,\quad\quad
  |A_3\cdot A_3|\leq 2^{14}(2^{25}\alpha^9)^6 |A_3|=2^{164}\alpha^{54}|A_3|.
  $$

  But we can also control the additive properties of~$A_3$. Precisely,
  we can apply Lemma~\ref{lm-stepping} to the group~$\Ff_p$, the random
  variables $X$ and~$Y$, and the set~$B=A_3$, with parameters
  $(\alpha,\beta)=(8\alpha,2^{29}\alpha^{10})$, since~$A_3\subset A_1$
  and
  $$
  |A_3|\geq \frac{|A_2|}{2^{27}\alpha^9}\geq
  \frac{1}{2^{29}\alpha^{10}\rho_Y(0)}
  $$
  thanks to~(\ref{eq-bgk-lb2}). The conclusion is that
  $$
  e(A_3)\geq
  \frac{1}{4(8\alpha)^9(2^{29}\alpha^{10})^4}=\frac{1}{2^{144}\alpha^{49}}.
  $$

  Applying Theorem~\ref{th-schoen} to~$A_3 \subset \Ff_p$, we find a
  subset $A_4\subset A_3$ with $|A_3|\leq 4\alpha|A_4|$ and
  $$
  |A_4+A_4|\leq 2^{14}(2^{144}\alpha^{49})^6
  |A_4|=2^{878}\alpha^{294}|A_4|.
  $$

  Since, in addition, we have
  $$
  |A_4\cdot A_4|\leq |A_3\cdot A_3|\leq 2^{164}\alpha^{54}|A_3|\leq
  2^{166}\alpha^{55}|A_4|,
  $$
  and
  $$
  \frac{1}{2^{31}\alpha^{10}\rho_Y(0)}\leq \frac{|A_3|}{4\alpha}\leq
  |A_4|\leq |A_3|\leq \frac{8\alpha}{\rho_Y(0)},
  $$
  we finally have proved Proposition~\ref{pr-exp-1} with the set~$A$
  equal to~$A_4$.
\end{proof}

In order to prove Proposition~\ref{pr-alt-1}, we combine this with a
consequence of Lemma~\ref{lm-bgk-link}, using Fourier analysis to obtain
a ``diophantine'' interpretation of
$\expect(|\varphi_X(X\widehat{Y})|^2)$.

\begin{lemma}\label{lm-bgk-link2}
  We have
  $$ \expect(\rho_Y(XY))=\rho_Y(0) \expect(|\varphi_X(X\widehat{Y})|^2).
  $$
\end{lemma}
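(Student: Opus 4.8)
The plan is to unwind both sides by Fourier duality until each reduces to the \emph{same} triple expectation, namely $\expect\bigl(e(XY\widehat{Y}/p)\bigr)$, where $\widehat{Y}$ is read as an $\Ff_p$-valued random variable under the usual identification of $\widehat{\Ff_p}$ with $\Ff_p$ that sends $b$ to the character $x\mapsto e(bx/p)$. For the right-hand side, recall from the ``peaking'' construction that $\varphi_Y=|\varphi_X|^2$, so $|\varphi_X(X\widehat{Y})|^2=\varphi_Y(X\widehat{Y})$. Since $\varphi_Y$ is the (deterministic) characteristic function of $Y$, we have $\varphi_Y(a)=\expect(e(aY/p))$ for every $a\in\Ff_p$, and applying this with the random argument $a=X\widehat{Y}$ — legitimate because $Y$ is independent of $(X,\widehat{Y})$ — gives $\varphi_Y(X\widehat{Y})=\expect\bigl(e(X\widehat{Y}Y/p)\mid X,\widehat{Y}\bigr)$. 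Taking the outer expectation yields $\expect\bigl(|\varphi_X(X\widehat{Y})|^2\bigr)=\expect\bigl(e(XY\widehat{Y}/p)\bigr)$, the expectation now over all of $X$, $Y$, $\widehat{Y}$.

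For the left-hand side I would invoke Lemma~\ref{lm-bgk-link}, which gives $\rho_Y(XY)=\tfrac{M_X}{p}\varphi_{\widehat{Y}}(XY)$, where $\varphi_{\widehat{Y}}$ is the characteristic function of the peaking $\widehat{Y}$ identified with a function on $\Ff_p$; explicitly $\varphi_{\widehat{Y}}(z)=\expect(e(z\widehat{Y}/p))$. Because $\widehat{Y}$ is independent of $(X,X_1,X_2)$ and hence of $(X,Y)$, conditioning on $(X,Y)$ and then taking expectations gives $\expect(\rho_Y(XY))=\tfrac{M_X}{p}\expect\bigl(e(XY\widehat{Y}/p)\bigr)$, the very same triple expectation. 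Comparing the two computations and using $M_X/p=\rho_Y(0)$, which is exactly formula~\eqref{eq-rho1}, establishes the identity.

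I do not expect any genuine obstacle; the only point requiring care is bookkeeping the identifications ($\widehat{\Ff_p}\cong\Ff_p$, the double dual again $\cong\Ff_p$) together with the independence relations among $X$, $Y$ and $\widehat{Y}$, so that the element $XY\widehat{Y}$ appearing inside $e(\cdot/p)$ on the two sides is literally the same product of three scalars in the commutative ring $\Ff_p$.
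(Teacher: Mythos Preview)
Your argument is correct and follows essentially the same route as the paper: both sides are reduced, via Fubini and the independence relations, to the common triple expectation $\expect\bigl(e(XY\widehat{Y}/p)\bigr)$, and then $\rho_Y(0)=M_X/p$ finishes. The only cosmetic difference is that you invoke the identity $\varphi_Y=|\varphi_X|^2$ directly on the right-hand side, whereas the paper expands $Y=X_1-X_2$ and factors the exponential over the independent copies $X_1,X_2$; the two are equivalent since that identity was already recorded in the stepping construction.
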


\begin{proof}
  Using the formula $\rho_Y(0)=M_X/p$ and Lemma~\ref{lm-bgk-link}, we
  have
  $$
  \expect(\rho_Y(XY))=\rho_Y(0)\expect(\varphi_{\widehat{Y}}(XY)),
  $$
  and it only remains to appeal to the symmetry formula
  $$
  \expect(\varphi_{\widehat{Y}}(XY))=
  \expect(|\varphi_X(X\widehat{Y})|^2)
  $$
  to conclude. This last identity can be seen as a (very simple)
  instance of Fubini's formula:
  \begin{align*}
    \expect(\varphi_{\widehat{Y}}(XY))=
    \expect\Bigl(\expect\Bigl(e\Bigl(\frac{XY\widehat{Y}}{p}\Bigr)
    \Bigr)\Bigr) &=\expect\Bigl(
    \expect\Bigl(e\Bigl(\frac{X(X_1-X_2)\widehat{Y}}{p}\Bigr)\Bigr)\Bigr)
    \\
    &=\expect\Bigl(\Bigl|\expect\Bigl(e\Bigl(\frac{XX_1\widehat{Y}}{p}
    \Bigr)\Bigr)\Bigr|^2\Bigr)=\expect(|\varphi_X(X_1\widehat{Y}))|^2),
  \end{align*}
  leading to the conclusion since $X$ and~$X_1$ are identically
  distributed.
\end{proof}

\begin{proof}[Proof of Proposition~\ref{pr-alt-1}]
  We define $\alpha\geq 1$ by
  $\expect(|\varphi_X(X\widehat{Y})|^2)=\alpha^{-1}$. 
  By Lemma~\ref{lm-bgk-link2}, we have then
  $$
  \expect(\rho_Y(XY))=\frac{\rho_Y(0)}{\alpha}.
  $$

  If the conditions~(\ref{eq-bgk-cond1}) are not valid, then by
  construction this implies that the bound
  $$
  \expect(|\varphi_X(X\widehat{Y})|^2)=\alpha^{-1}\leq
  4(\rho_X(0)+\rho_Y(0))
  $$
  holds.  On the other hand, if these conditions are satisfied, then we
  can apply Proposition~\ref{pr-exp-1} to deduce the existence of
  $A\subset \Ff_p^{\times}$ with
  $$
  \max(A+A,A\cdot A)\ll \alpha^d|A|
  $$
  and
  $$
  \frac{1}{\alpha^d\rho_Y(0)}\ll |A|\ll \frac{\alpha}{\rho_Y(0)},
  $$
  where $d$ and the implied constants are absolute (and explicit).

  Let~$\eta>0$. We distinguish two further cases:

  (1) If $|A|\leq p^{1-\eta}$, then denoting by $\delta>0$ the exponent
  in Theorem~\ref{th-sum-product} for $\gamma=\eta$, we have
  $\alpha^d\gg |A|^{\delta}$. It follows that
  $\alpha^d\gg \alpha^{-d\delta}\rho_Y(0)^{-\delta}$, and hence
  $$
  \expect(|\varphi_X(X\widehat{Y})|^2)=\alpha^{-1}\ll
  \rho_Y(0)^{\delta/(d+d\delta)}.
  $$

  (2) If $|A|>p^{1-\eta}$, then
  $$
  \expect(|\varphi_X(X\widehat{Y})|^2)=\alpha^{-1}\ll
  \frac{1}{|A|\rho_Y(0)}\ll \frac{p^{-1+\eta}}{\rho_Y(0)}.
  $$

  All three of the bounds thus obtained imply that the
  estimate~(\ref{eq-estimate}) holds (with
  $\beta=\min(1,\delta/(d+d\delta))$), concluding the proof.
\end{proof}

We now come to the proof of Proposition~\ref{pr-alt-2}. Only in the last
step will the specific properties of the distribution of~$S$ be
important.

\begin{proof}[Proof of Proposition~\ref{pr-alt-2}]
We recall the definition
$$
X_k=\sum_{i=1}^k (S_{2i-1}-S_{2k}),\quad\quad k\geq 1,
$$
of the random walk and the formula $\varphi_{X_k}=|\varphi_S|^{2k}$.

We observe first that for any integer~$k\geq 1$ and $\nu>0$, provided
the condition $4k\nu\leq \theta$ is satisfied, the estimate
\begin{equation}\label{eq-mbound3}
  \rho_{X_{2k}}(0)=\frac{M_{X_k}}{p}=
  \frac{1}{p}\sum_{a\in\Ff_p}|\varphi_{S}(a)|^{4k}\geq
  |\Lambda_{\nu}|p^{-1-\theta}
\end{equation}
holds by~(\ref{eq-rho1}) and the definition of~$\Lambda_{\nu}$.

We now claim that if $p$ is large enough, depending only on~$\theta$,
then we can find some integer~$k\geq 1$ and $\nu<\demi\theta$,
independent of~$p$, such that $4k\nu\leq \theta$ and
\begin{equation}\label{eq-m-bound2}
  p^{-\theta} \leq  \frac{|\Lambda_{\nu}|}{M_{X_k}},
\end{equation}
which, together with~(\ref{eq-mbound3}) and the formula
$\rho_{X_{2k}}(0)=M_{X_k}/p$, ensures that~(\ref{eq-m-bound}) holds for
these choices of~$k$ and~$\nu$.

To prove the claim, we first note that there is a general upper bound
$$
M_{X_k}\leq |\Lambda_{1/k}|+p\cdot (p^{-4k})^k=|\Lambda_{1/k}|
+p^{-3}\leq |\Lambda_{1/k}|(1+p^{-3}),
$$
valid for any integer~$k\geq 1$.  Now, given~$k\geq 1$, we denote
$k_+=\lceil \frac{\theta}{k^2}\rceil$.  If the inequality
$M_{X_{k}}>p^{\theta}|\Lambda_{1/k_+}|$ holds, then it follows that
$$
|\Lambda_{1/k_+}|\leq |\Lambda_{1/k}|p^{-\theta}(1+p^{-3}).
$$

Iterating this observation $m$ times, starting from~$k=4$, we see that
\emph{either} we find~$k\geq 1$ such that~(\ref{eq-m-bound2}) holds for
$\nu=1/k_+$, or we have
$$
|\Lambda_{1/k}|\leq p^{1-m\theta}(1+p^{-3})^m
$$
for $m\geq 1$ and some~$k$ depending on~$m$. But for suitable $m$, we
obtain $|\Lambda_{1/k}|<1$, which is a contradiction since
$0\in\Lambda_{\nu}$ for all~$\nu$.

Our next goal is the inequality
\begin{equation}\label{eq-expansion}
  \expect(|\varphi_{X_k}(a X_k)|^2)\geq \varphi_{S}(a)^{4k}
\end{equation}
for all~$k\geq 1$ and $a\in\Ff_p$, and this will depend on the specific
choice of the random walk. Indeed, we first have
$$
\expect(|\varphi_{X_k}(a X_k)|^2)= \expect(\varphi_{X_k}(a
X_{2k}))=\expect(|\varphi_S (a X_{2k})|^{2k}) \geq \expect(\varphi_S(a
X_{2k}))^{2k},
$$
by Jensen's inequality. However, by a discrete Fubini, we have
$$
\expect(\varphi_S(a X_{2k}))=\expect(|\varphi_{X_{k}}(a
S)|^2)
$$
and $\expect(|\varphi_{X_k}(a S)|^2)=\varphi_{X_k}(a)^2$ since
$\varphi_{X_k}(a S)=\varphi_{X_k}(a)$ (the crucial fact from
Lemma~\ref{lm-uniform-properties}), which gives~(\ref{eq-expansion}).

We can then finally
deduce~(\ref{eq-last-bound}). From~(\ref{eq-m-bound2}) and the condition
$4k\nu\leq \theta$, we deduce the lower bound
$$
\proba(\widehat{X}_{2k}\in\Lambda_{\nu})\geq
p^{-\theta}\frac{|\Lambda_{\nu}|}{M_{X_k}}\geq p^{-2\theta},
$$
and then from~(\ref{eq-expansion}), we get
$$
\expect(|\varphi_{X_k}(X_k\widehat{X}_{2k})|^2) \geq
\expect(\varphi_{X_k}(\widehat{X}_{2k})^{4k}) \geq
p^{-4k^2\nu}\proba(\widehat{X}_{2k}\in\Lambda_{\nu}) \geq p^{-4k^2\nu
  -2\theta}\geq p^{-10\theta}.
$$
\end{proof}

\section{Remarks}

We conclude with a few brief remarks.
\par
(1) One interpretation of Theorem~\ref{th-bgk} is that it is one more
avatar of the fact that the additive and multiplicative structures of a
finite field (or of the integers) are fairly ``independent'': it
concerns the \emph{additive} Fourier transform of a
\emph{multiplicative} subgroup.  In this sense, it is of a flavor
comparable with the sum-product theorem.

One may however then wonder about exchanging the role of addition and
multiplication. And whereas the sum-product theorem is fully symmetric,
the ``dual'' of Theorem~\ref{th-bgk} would become the problem of
estimating sums of \emph{multiplicative} (Dirichlet) characters
modulo~$p$ over \emph{very short} intervals in~$\Ff_p$ -- a problem
which is intimately related with the Generalized Riemann Hypothesis and
properties of Dirichlet $L$-functions. (We see short intervals as
analogues of small multiplicative subgroups in view of their additive
properties, which makes them behave quite similarly to non-existent
small additive subgroups; this is reasonable especially because
Theorem~\ref{th-bgk} does extend to geometric progressions in addition
to multiplicative subgroups.)

Could the proof of Theorem~\ref{th-bgk} give insight about such
character sums?  This doesn't seem to be likely, because there is no
analogue of Lemma~\ref{lm-uniform-properties} (e.g., the existence of
\emph{one} large character sum for a non-trivial character does not, a
priori, lead to the existence of any other). Ultimately, this reflects
the fact that addition and multiplication \emph{are not} symmetric in
the definition of a field: multiplication is distributive with respect
to addition, and not the opposite, so that multiplication by non-zero
elements give automorphisms of the \emph{additive} group of a field,
leading to symmetry properties of the \emph{additive} Fourier transform
of multiplicative subgroups.
\par
(2) One can also ask if there are echoes in this proof of more classical
ideas in the study of exponential sums (such as those of Weyl, van der
Corput and Vinogradov, see e.g.~\cite[Ch.\,8]{ant}).

We see at least two clear links of this type:
\begin{itemize}
\item The use of $|\varphi_S|^2$ and higher powers is very much in the
  spirit of ``creating new points of summation'' or Weyl differencing.
\item The link in Lemma~\ref{lm-bgk-link2}, based on harmonic analysis,
  between averages of the Fourier transform and averages of the
  ``density'' $\rho_Y$ is an example of reduction of averages of
  exponential sums to point counting.
\end{itemize}

One related remark is that if we consider, instead of the crucial
expression $\expect(|\varphi_X(X\widehat{Y})|^2)$ in
Proposition~\ref{pr-alt-1}, the simpler
$\expect(|\varphi_X(\widehat{Y})|^2)$, then we get (up to normalization)
simply the fourth moment of $\varphi_X(a)$, instead of a kind of average
``twisted'' fourth moment.


\par
(3) Another parallel is with the work of Bourgain and
Gamburd~\cite{bourgain-gamburd} on expansion properties of Cayley graphs
of $\SL_2(\Ff_p)$, which is almost contemporary with
Theorem~\ref{th-bgk}. For instance, the crucial ``$L^2$-flattening
lemma'' of Bourgain and Gamburd~\cite[Prop.\,2]{bourgain-gamburd} can be
interpreted as a quantitative statement of decay of $\proba(Y=0)$ for a
stepping~$Y$ of certain random variables~$X$ on $\SL_2(\Ff_p)$.
Lemma~\ref{lm-uniform-properties} also has a similar flavor to the use
of the ``pseudo-randomness'' of $\SL_2(\Ff_p)$ (i.e., the absence of
non-trivial irreducible representations of small dimension)
in~\cite[Prop.\,1]{bourgain-gamburd}.

\end{document}